\newcounter{minutes}\setcounter{minutes}{\time}
\newcounter{hours}\setcounter{hours}{\time}
\date{}
\newfont{\cyrilic}{wncyr10 scaled 1000}
\title[Convexity properties of generalized trigonometric functions]{Convexity properties of generalized trigonometric and hyperbolic functions}
\keywords{Power mean, eigenfunctions of $p$-Laplacian, generalized trigonometric functions, generalized inverse trigonometric functions, convexity with respect to power means, geometrical convexity.}
\subjclass[2010]{33C99, 33B99}
\author[\'A. Baricz]{\'Arp\'ad Baricz}
\address{Department of Economics, Babe\c{s}-Bolyai University, 400591 Cluj-Napoca, Romania}
\email{bariczocsi@yahoo.com}
\author[B.A. Bhayo]{Barkat Ali Bhayo}
\address{Department of Mathematical Information Technology, University of Jyv\"askyl\"a, 40014 Jyv\"askyl\"a, Finland}
\email{bhayo.barkat@gmail.com}
\author[R. Kl\'en]{Riku Kl\'en}
\address{Department of Mathematics and Statistics, University of Turku, 20014 Turku, Finland}
\email{riku.klen@utu.fi}
\newtheorem{theorem}{Theorem}
\newtheorem{lemma}{Lemma}
\begin{document}

\def\thefootnote{}
\footnotetext{ \texttt{File:~\jobname .tex,
          printed: \number\year-0\number\month-\number\day,
          \thehours.\ifnum\theminutes<10{0}\fi\theminutes}
} \makeatletter\def\thefootnote{\@arabic\c@footnote}\makeatother


\begin{abstract}
We study the power mean inequality for the generalized trigonometric and hyperbolic functions
with two parameters. The generalized $p$-trigonometric and $(p,q)$-trigonometric functions
were introduced by Lindqvist and Takeuchi, respectively.
\end{abstract}


\maketitle



\section{\bf Introduction and Main Results}
The generalized trigonometric functions were introduced by Lindqvist \cite{lp} two decades ago. These $p$-trigonometric functions, $p > 1$, coincide with the usual trigonometric functions for $p=2$. Recently, the $p$-trigonometric functions have been studied extensively, see for example \cite{bv,bem,dm,le} and their references. Dr\'abek and Man\'asevich \cite{dm} considered a certain ($p,q$)-eigenvalue problem with the Dirichl\'et boundary condition and found the complete solution to the problem. The solution of a special case is the function $\sin_{p,q}$, which is the first example of so called $(p,q)$-trigonometric function. Motivated by the ($p,q$)-eigenvalue problem Takeuchi \cite{t} introduced the $(p,q)$-trigonometric functions, $p,q > 1$. These $(p,q)$-trigonometric functions have recently been studied also in \cite{bv2,bv3,be,cjw,egl}, and the functions agree with the $p$-trigonometric functions for $p=q$.

The \emph{Gaussian hypergeometric function} is the
analytic continuation to the slit plane $\mathbb{C}\setminus[1,\infty)$ of the series
$$F(a,b;c;z)={}_2F_1(a,b;c;z)=\sum_{n\geq0}\frac{(a,n)(b,n)}
{(c,n)}\frac{z^n}{n!},\qquad |z|<1,$$
for given complex numbers $a,b$ and $c$ with $c\neq0,-1,-2,\ldots$ .
Here $(a,0)=1$ for $a\neq 0$, and $(a,n)$ is the \emph{shifted factorial function} or the \emph{Appell symbol}
$$(a,n)=a(a+1)(a+2)\cdots(a+n-1)$$
for $n=1,2,\dots \,.$
The hypergeometric function is used to define the $(p,q)$-trigonometric functions and it has numerous special functions as its
special or limiting cases \cite{AS}.

For $p,q > 1$ and $x\in (0,1)$ the function ${\rm arcsin}_{p,q}$ is defined by
\[
  {\arcsin}_{p,q}(x)=\int^x_0(1-t^q)^{-1/p}dt= x\,F\left(\frac{1}{p},\frac{1}{q};1+\frac{1}{q};x^q\right).
\]
We also define for $x\in(0,1)$
${\arccos}_{p,q}(x)={\rm arcsin}_{p,q}((1-x^p)^{1/q})$
(see \cite[Prop. 3.1]{egl}) and for $x\in (0,\infty)$
\begin{eqnarray*}{\rm arsinh}_{p,q}(x)&=&\int^x_0(1+t^q)^{-1/p}dt=
x\,F\left(\frac{1}{p},\frac{1}{q};1+\frac{1}{q};-x^q\right)\\
&=&\left(\frac{x^p}{1+x^q}\right)^{1/p}F\left(1,\frac{1}{p},1+\frac{1}{q};\frac{x^q}{1+x^q}\right).
\end{eqnarray*}

Their inverse functions are given by
$$\sin_{p,q}, \cos_{p,q}:(0,\pi_{p,q}/2)\to (0,1),\ \ \sinh_{p,q}:(0,\infty)\to (0,\infty),$$
where $\pi_{p,q}/2 = \arcsin_{p,q}(1)$. Now, we define $\tan_{p,q}:(0,\pi_{p,q}/2)\to (0,\infty)$ by
$$\tan_{p,q}(x)=\frac{\sin_{p,q}(x)}{\cos_{p,q}(x)},\quad \cos_{p,q}(x)\neq 0,$$
and we denote its inverse by ${\arctan}_{p,q}$.

We introduce next a geometric definition for the functions $\sin_{p,q}$ and $\cos_{p,q}$. The definition is based on the formula \cite[eq. (2.7)]{egl}
\begin{equation}\label{thm:sinpq+cospc}
  |\sin_{p,q}(x)|^q + |\cos_{p,q}(x)|^p = 1.
\end{equation}
The usual trigonometric functions are geometrically defined by the unit circle. In the same manner we can define the $(p,q)$-trigonometric functions. Instead of the unit circle we use the Lam\'e-curve (or generalized superellipse) defined by
\[
  C = \{ (x,y) \in \mathbb{R}^2 \colon x=\cos^{2/p} t, \, y=\sin^{2/q} t, \, t \in  [0,\pi/2] \}.
\]
Because $(\sin^{2/q} t)^q + (\cos^{2/p} t)^p = 1$ it is clear by \eqref{thm:sinpq+cospc} that $C=D$ for
\[
  D = \{ (x,y) \in \mathbb{R}^2 \colon x=\sin_{p,q}(t), \, y=\cos_{p,q}(t),\, t\in [0,\pi_{p,q}/2] \}.
\]
The curve $C$ is defined only in the first quadrant, but by reflections we can easily extend $C$ to form a circular curve, see Figure \ref{fig1}.

\begin{center}
\begin{figure}[ht]
\psfrag{A}{$\sin_{p,q} \, x$}
\psfrag{B}{$\cos_{p,q}\, x$}
\psfrag{C}{$C$}
\includegraphics[width=0.3 \textwidth]{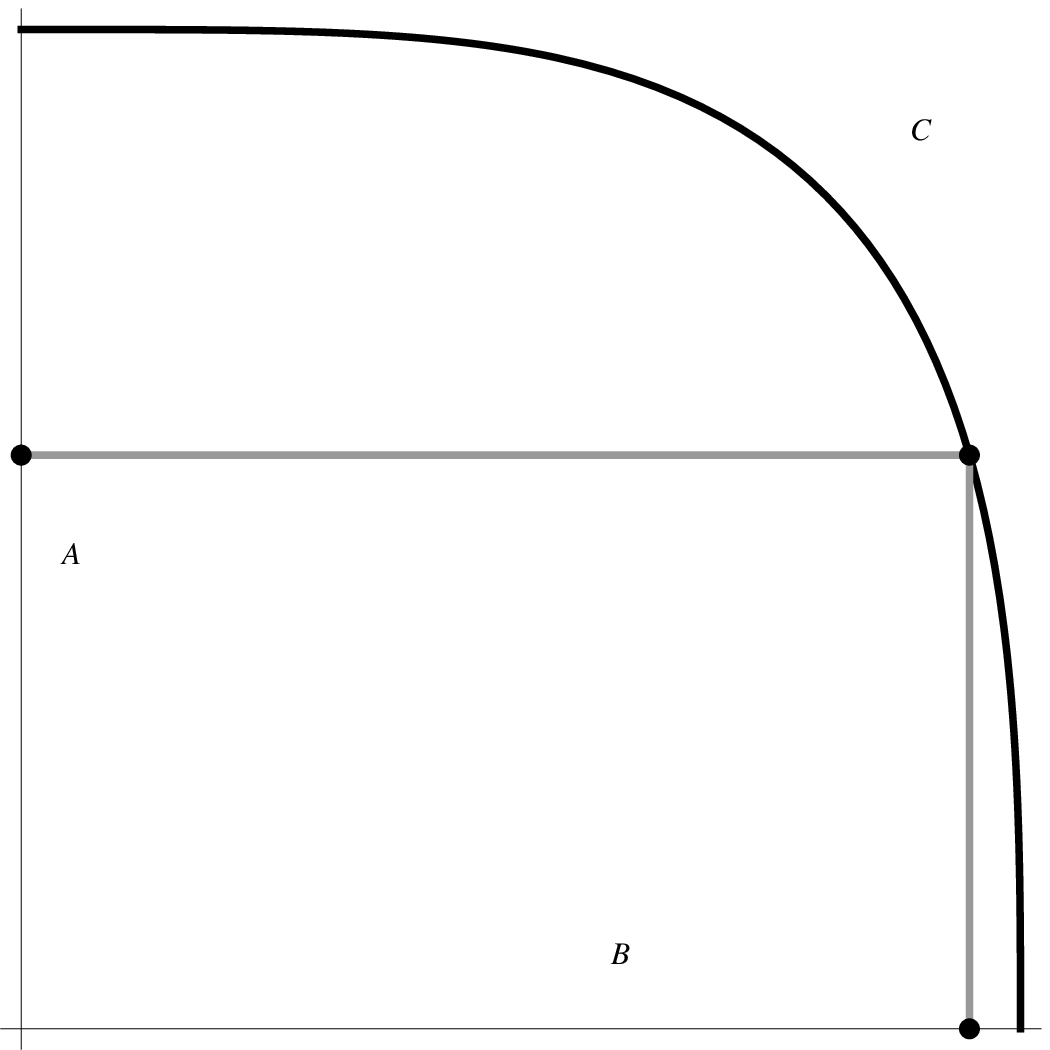}\hspace{2cm}
\includegraphics[width=0.3 \textwidth]{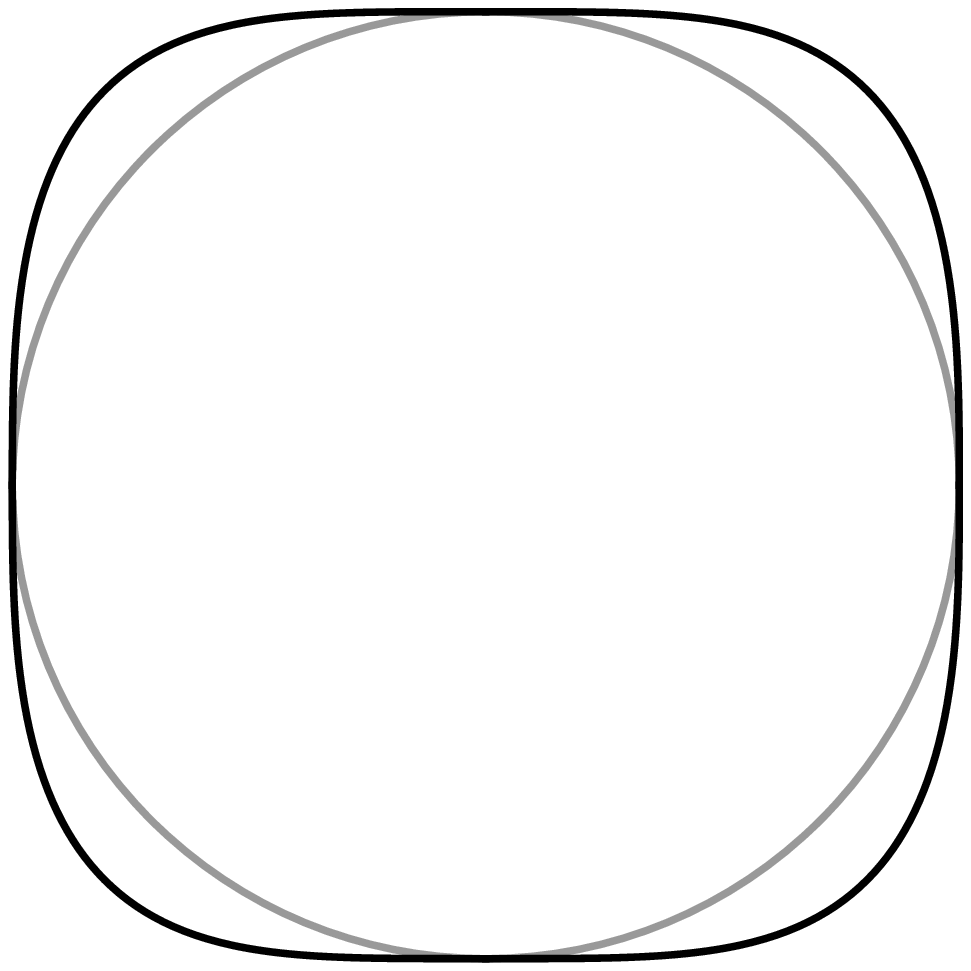}
\hspace{-1cm}\caption{Left: Curve $C$ for $(p,q) =(4,3)$ and points $(0,\sin_{4,3} \, x)$, $(\cos_{4,3} \, x,0)$ and $(\cos_{4,3} \, x,\sin_{4,3} \, x)$. Right: Curve $C$ for $(p,q)=(4,3)$ extended to all four quadrants (black) and the unit circle (gray). \label{fig1}}
\end{figure}
\end{center}

Now, we recall the definition of convex functions with respect to power means. For $a\in\mathbb{R}$ and $x,y>0$, the power mean $M_a$ of order $a$ is defined by
$$M_a(x,y)=\displaystyle\left\{\begin{array}{lll} \displaystyle\left(\frac{x^a+y^a}{2}\right)^{1/a},\;\quad a\neq 0,\\
\sqrt{x\,y}\,, \quad\qquad\qquad a=0\,.\end{array}\right.$$
We consider the function $f:I\subset(0,\infty)\to (0,\infty),$ which is continuous, and let $M_a(x,y)$
and $M_b(x,y)$ be the power means of order $a$ and $b$ of $x>0$ and $y>0$. For $a,b\in\mathbb{R}$ we say that $f$ is $M_aM_b$-convex (concave) or simply $(a,b)$-convex (concave), if for those $a$ and $b$ we have
$$f (M_a(x, y)) \leq (\geq) M_b(f(x),f(y)) \,\, \text{ for all} \,\, x,y \in I\,.$$
It is important to mention here that $(1,1)$-convexity means the usual convexity, $(1,0)$ means the log-convexity and $(0,0)$-convexity is the so-called geometrical (multiplicative) convexity. Moreover, it is known that if the function $f$ is differentiable, then it is $(a,b)$-convex (concave) if and only if $x\mapsto x^{1-a}f'(x)[f(x)]^{b-1}$ is increasing (decreasing). See \cite{bari} for more details.

In the next results we can see that the above mentioned $(p,q)$ generalized trigonometric functions are $(a,a)$-convex/concave. These results refine the earlier results in \cite{bv2}.

\begin{theorem}\label{thm1} If $p,q>1$ and $a\geq 1,$ then $\arcsin_{p,q}$ is $(a,a)$-convex on $(0,1),$ $\arctan_{p,q}$ is $(a,a)$-concave on $(0,1),$ while ${\rm arcsinh}_{p,q}$ is $(a,a)$-concave on $(0,\infty).$ In other words, if $p,q>1$ and $a\geq 1$, then we have
\begin{eqnarray*}
{\arcsin}_{p,q}(M_a(r,s)) & \leq & M_a({\rm arcsin}_{p,q}(r),{\rm arcsin}_{p,q}(s))\,,\quad r,s\in(0,1),\\
{\arctan}_{p,q}(M_a(r,s)) & \geq & M_a({\rm arctan}_p(r),{\rm arctan}_p(s))\,,\quad r,s\in(0,1),\\
{\rm arsinh}_{p,q}(M_a(r,s)) & \geq & M_a({\rm arsinh}_{p,q}(r),{\rm arsinh}_{p,q}(s))\,,\quad r,s>0.
\end{eqnarray*}
\end{theorem}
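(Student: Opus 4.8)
The plan is to invoke the differentiability criterion recorded just above the theorem: a differentiable $f\colon I\to(0,\infty)$ is $(a,a)$-convex (concave) precisely when $g(x)=x^{1-a}f'(x)[f(x)]^{a-1}$ is increasing (decreasing). Since each of the three functions is positive, differentiable, and vanishes at the left endpoint, I would work with the logarithmic derivative
\[
\frac{g'(x)}{g(x)}=(a-1)\left(\frac{f'(x)}{f(x)}-\frac1x\right)+\frac{f''(x)}{f'(x)}.
\]
The whole argument then rests on a single elementary observation: if $f(0^{+})=0$ and $f'$ is monotone, then both summands on the right carry the sign of $f''$. Indeed, $f''/f'$ has the sign of $f''$, while the monotonicity of $f'$ forces $f(x)=\int_0^x f'(t)\,dt$ to compare with $xf'(x)$ --- giving $f(x)\le xf'(x)$ when $f'$ is increasing and $f(x)\ge xf'(x)$ when $f'$ is decreasing --- so that $f'/f-1/x$ inherits the same sign. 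Because $a-1\ge 0$, it follows that $f'$ increasing yields $g'\ge 0$ (hence $(a,a)$-convexity) and $f'$ decreasing yields $g'\le 0$ (hence $(a,a)$-concavity), uniformly for all $a\ge 1$.

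With this reduction, two of the three cases are immediate. For $\arcsin_{p,q}$ the derivative is $(1-x^q)^{-1/p}$, which is increasing on $(0,1)$, so I obtain $(a,a)$-convexity. For $\operatorname{arsinh}_{p,q}$ the derivative is $(1+x^q)^{-1/p}$, which is decreasing on $(0,\infty)$, so I obtain $(a,a)$-concavity. In both cases the sign of $f''$ is transparent from differentiating the explicit integrand, and no further work is needed.

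The remaining case $\arctan_{p,q}$ is where the real work lies, since here $f'$ has no closed form and must be extracted from the inverse relation $x=\tan_{p,q}(y)$. First I would record $\sin_{p,q}'=\cos_{p,q}$ (from inverting $\arcsin_{p,q}$) and, by differentiating the identity $\sin_{p,q}^q+\cos_{p,q}^p=1$ of \eqref{thm:sinpq+cospc}, the companion formula $\cos_{p,q}'=-\tfrac{q}{p}\sin_{p,q}^{q-1}\cos_{p,q}^{2-p}$. Combining these through the quotient rule gives $\tan_{p,q}'(y)=1+\tfrac{q}{p}\,s^q/(1-s^q)$ with $s=\sin_{p,q}(y)$, whence
\[
\arctan_{p,q}'(x)=\frac{1}{\tan_{p,q}'(y)}=\frac{1-s^q}{\,1-\tfrac{p-q}{p}\,s^q\,}.
\]
To finish I would show this is decreasing in $x$. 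Since $x=s(1-s^q)^{-1/p}$ is strictly increasing in $s$ on $(0,1)$, it suffices to check that the right-hand side is decreasing in $s$; setting $u=s^q$ and $\lambda=(p-q)/p<1$, a one-line differentiation of $(1-u)/(1-\lambda u)$ gives derivative $(\lambda-1)/(1-\lambda u)^2<0$, the denominator being positive because $1-\lambda u=\bigl(p(1-u)+qu\bigr)/p>0$. Thus $\arctan_{p,q}'$ is decreasing, so $\arctan_{p,q}$ is concave and the general observation delivers $(a,a)$-concavity. The main obstacle throughout is exactly this last step --- deriving the formula for $\arctan_{p,q}'$, changing variables between $x$ and $s$, and verifying the sign --- whereas the convex and concave cases for $\arcsin_{p,q}$ and $\operatorname{arsinh}_{p,q}$, as well as the overall logic, reduce to the sign bookkeeping in the displayed logarithmic derivative.
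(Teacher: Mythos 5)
Your proof is correct, and its skeleton matches the paper's: both arguments reduce the theorem to the monotonicity of the very same auxiliary function $x\mapsto \left(f(x)/x\right)^{a-1}f'(x)$. The execution differs in both halves, though. Where the paper proves this monotonicity in Lemma \ref{lem1} by combining the sign of $f''$ with the monotone form of l'Hospital's rule (Lemma \ref{125}) to control $f(x)/x$, and then finishes with a bare-hands two-point argument (the functions $g_i(x)=h_i(u)^a-\tfrac12\left(h_i(x)^a+h_i(y)^a\right)$), you take the logarithmic derivative of $x^{1-a}f'(x)f(x)^{a-1}$ and control the term $f'/f-1/x$ by the elementary comparison $f(x)\lessgtr xf'(x)$, valid since $f(0^+)=0$ and $f'$ is monotone --- the same content as the l'Hospital lemma, obtained directly from $f(x)=\int_0^x f'(t)\,dt$ --- and you close by citing the differentiability criterion (Lemma \ref{bari lemma 3}), which the paper states but deploys only in the later theorems. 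The genuinely different sub-step is $\arctan_{p,q}$: the paper shows $\tan_{p,q}$ is convex by computing $\tan_{p,q}''>0$ (Lemma \ref{art}) and then invokes the inverse-function lemma (Lemma \ref{lemmr}), whereas you compute $\arctan_{p,q}'$ in closed form as $(1-s^q)/\bigl(1-\tfrac{p-q}{p}s^q\bigr)$ with $s=\sin_{p,q}$ and verify it is decreasing by the substitution $u=s^q$; both are routine, but your route produces an explicit formula and avoids the two inverse-function lemmas (Lemmas \ref{lemku} and \ref{lemmr}). What your approach buys is a single unified sign lemma covering all three cases at once; what the paper's buys is a pair of reusable monotonicity lemmas (Lemmas \ref{lem1} and \ref{kuclem}) that also carry the proof of Theorem \ref{thm2}.

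One side remark: your formula $\cos_{p,q}'=-\tfrac{q}{p}\sin_{p,q}^{q-1}\cos_{p,q}^{2-p}$, obtained by differentiating \eqref{thm:sinpq+cospc} with $\sin_{p,q}'=\cos_{p,q}$, disagrees in the constant with Lemma \ref{jan} as printed (which has $-\tfrac{p}{q}$), and likewise your $\tan_{p,q}'=1+\tfrac{q}{p}\sin_{p,q}^q/\cos_{p,q}^p$ versus the printed $1+\tfrac{p}{q}\sin_{p,q}^q/\cos_{p,q}^p$. Redoing the differentiation confirms your constants; the printed ones are misquotes. Since both constants are positive, no sign conclusion in either your proof or the paper's is affected.
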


\begin{theorem}\label{thm2} If $p,q>1$ and $a\geq 1,$ then $\sin_{p,q}$ is $(a,a)$-concave,
and $\cos_{p,q},$ $\tan_{p,q},$ $\sinh_{p,q}$ are $(a,a)$-convex on $(0,1).$ In other words, if $p,q>1,$ $a\geq 1$ and $r,s\in(0,1)$, then the next inequalities are valid
\begin{eqnarray*}
  {\sin}_{p,q}(M_a(r,s)) & \geq & M_a({\rm sin}_{p,q}(r),{\rm sin}_{p,q}(s)),\\
  {\cos}_{p,q}(M_a(r,s)) & \leq & M_a({\rm cos}_{p,q}(r),{\rm cos}_{p,q}(s)),\\
  {\tan}_{p,q}(M_a(r,s)) & \leq & M_a({\rm tan}_{p,q}(r),{\rm tan}_{p,q}(s)),\\
  {\sinh}_{p,q}(M_a(r,s)) & \leq & M_a({\rm sinh}_{p,q}(r),{\rm arsinh}_{p,q}(s)).
\end{eqnarray*}
\end{theorem}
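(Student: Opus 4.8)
The plan is to reduce the theorem to Theorem~\ref{thm1} by inversion, treating $\cos_{p,q}$ separately. The underlying principle is a duality for $(a,a)$-convexity under taking inverses: if $f\colon I\to J$ is a continuous, strictly increasing bijection, then $f$ is $(a,a)$-convex on $I$ if and only if $f^{-1}$ is $(a,a)$-concave on $J$. Indeed, starting from $f(M_a(u,v))\le M_a(f(u),f(v))$ and substituting $u=f^{-1}(r)$, $v=f^{-1}(s)$, one gets $f(M_a(f^{-1}(r),f^{-1}(s)))\le M_a(r,s)$; applying the increasing map $f^{-1}$ to both sides yields $M_a(f^{-1}(r),f^{-1}(s))\le f^{-1}(M_a(r,s))$, which is exactly the $(a,a)$-concavity of $f^{-1}$. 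The reverse implication and the convex/concave interchange are obtained in the same way, with the same fixed exponent $a\ge 1$ inherited from Theorem~\ref{thm1}.

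Since $\arcsin_{p,q}$, $\arctan_{p,q}$ and ${\rm arsinh}_{p,q}$ are strictly increasing with inverses $\sin_{p,q}$, $\tan_{p,q}$ and $\sinh_{p,q}$, the duality principle combined with Theorem~\ref{thm1} settles three of the four assertions immediately: the $(a,a)$-convexity of $\arcsin_{p,q}$ gives the $(a,a)$-concavity of $\sin_{p,q}$, while the $(a,a)$-concavity of $\arctan_{p,q}$ and of ${\rm arsinh}_{p,q}$ gives the $(a,a)$-convexity of $\tan_{p,q}$ and of $\sinh_{p,q}$. These cases require no computation beyond Theorem~\ref{thm1}.

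The function $\cos_{p,q}$ is the real obstacle, since it is strictly decreasing and its arc-inverse is not covered by Theorem~\ref{thm1}. For it I would argue directly from the differentiation criterion recalled in the introduction: the relevant monotone quantity is $x\mapsto x^{1-a}\cos_{p,q}'(x)[\cos_{p,q}(x)]^{a-1}$. Differentiating the identity~\eqref{thm:sinpq+cospc} and using the relation $\sin_{p,q}'=\cos_{p,q}$, which follows from the definition of $\arcsin_{p,q}$ together with~\eqref{thm:sinpq+cospc}, gives $\cos_{p,q}'(x)=-\frac{q}{p}[\sin_{p,q}(x)]^{q-1}[\cos_{p,q}(x)]^{2-p}$. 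Hence the question reduces to the monotonicity of the single expression $E(x)=x^{1-a}[\sin_{p,q}(x)]^{q-1}[\cos_{p,q}(x)]^{a+1-p}$.

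The heart of the matter is the sign of the logarithmic derivative
\[
  \frac{E'(x)}{E(x)}=\frac{1-a}{x}+(q-1)\frac{\cos_{p,q}(x)}{\sin_{p,q}(x)}-\frac{q(a+1-p)}{p}\,[\sin_{p,q}(x)]^{q-1}[\cos_{p,q}(x)]^{1-p}.
\]
Here the first term is nonpositive because $a\ge 1$, the second is positive because $q>1$, and the third carries the indefinite factor $a+1-p$; moreover the competing terms become singular as $x\to 0^{+}$ and as $x\to(\pi_{p,q}/2)^{-}$, so their balance is delicate. The main difficulty is therefore to show that, under $p,q>1$ and $a\ge 1$, these contributions combine with a fixed sign throughout the interval; this is where elementary bounds on $\sin_{p,q}$ and $\cos_{p,q}$ together with~\eqref{thm:sinpq+cospc} enter. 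It is this sign analysis, rather than the inversion step, that is the crux and that establishes the monotonicity behind the statement for $\cos_{p,q}$.
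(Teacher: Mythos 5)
Your duality principle is correct as stated (for a strictly increasing continuous bijection $f$, the substitution $u=f^{-1}(r)$, $v=f^{-1}(s)$ works in both directions), and it is a genuinely different route from the paper, which instead reruns the $g_i$-argument of Theorem \ref{thm1} using the monotonicity statements of Lemma \ref{kuclem} (themselves obtained from the inverse-function Lemmas \ref{lemku} and \ref{lemmr} plus the monotone l'Hospital rule). Your reduction cleanly settles $\sin_{p,q}$ (image of $(0,1)$ under $\arcsin_{p,q}$ is $(0,\pi_{p,q}/2)\supset(0,1)$) and $\sinh_{p,q}$ (domain $(0,\infty)$). But it does \emph{not} settle $\tan_{p,q}$ on all of $(0,1)$: Theorem \ref{thm1} gives $(a,a)$-concavity of $\arctan_{p,q}$ only on $(0,1)$, so dualizing yields convexity of $\tan_{p,q}$ only on the image $\bigl(0,\arctan_{p,q}(1)\bigr)$, and since $(\tan_{p,q})'>1$ by Lemma \ref{jan}, one has $\tan_{p,q}(x)>x$ and hence $\arctan_{p,q}(1)<1$; the subinterval $[\arctan_{p,q}(1),1)$ is left uncovered. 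This is fixable — the proof of Theorem \ref{thm1} via Lemmas \ref{art} and \ref{125} actually gives concavity of $\arctan_{p,q}$ on $(0,\tan_{p,q}(1))$, which is what your dualization needs — but as written the claim that three of four cases follow "immediately" is not accurate.

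The serious gap is $\cos_{p,q}$: you correctly isolate the quantity $E(x)=x^{1-a}[\sin_{p,q}(x)]^{q-1}[\cos_{p,q}(x)]^{a+1-p}$ and its logarithmic derivative (your constant $q/p$ in $\cos_{p,q}'$ is in fact the correct one; the $p/q$ in Lemma \ref{jan} is a misprint, harmless for sign questions), but you then only \emph{announce} the sign analysis without performing it, so the cos case is not proved. Worse, no such fixed-sign analysis can succeed under the stated hypotheses $p,q>1$, $a\ge 1$: at $p=q=2$, $a=1$ one gets $E(x)=\sin x$, which is \emph{increasing}, so by the iff-criterion of Lemma \ref{bari lemma 3} the classical cosine is $(1,1)$-concave on $(0,1)$ — consistent with $\cos''=-\cos<0$ there — and the asserted inequality $\cos\bigl(\tfrac{r+s}{2}\bigr)\le\tfrac{\cos r+\cos s}{2}$ fails (it reverses strictly for $r\neq s$). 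The direction genuinely depends on the parameters: e.g.\ at $p=q=2$, $a=2$ one has $E(x)=\sin(2x)/(2x)$, which is decreasing, so $(2,2)$-convexity does hold. So the obstruction you ran into is real: the cos clause of Theorem \ref{thm2} is false at $a=1$, $p=q=2$, and the paper's own one-line proof hides the same error — the "similarly" step of Lemma \ref{kuclem} asserts that $(\cos_{p,q})'$ is increasing, which already fails classically since $\cos'=-\sin$ is decreasing on $(0,1)$. Your instinct that $\cos_{p,q}$ is "the real obstacle" was right, but your proposal neither closes that case nor could close it as stated.
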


The next theorems improve some of the above results.

\begin{theorem}\label{nthm1} If $p,q>1$, $a\leq 0$ and $b\in \mathbb{R}$ or $0< a\leq b$ and $b\leq 1$, then ${\arcsin}_{p,q}$ is $(a,b)$-convex on $(0,1),$ and in particular if $p=q$, then the function $\arcsin_p=\arcsin_{p,p}$ is $(a,b)$-convex on $(0,1).$ In other words, if $p,q>1$, $a\leq 0$ and $b\in \mathbb{R}$ or $0< a\leq b$ and $b\leq 1$, then for all $r,s\in(0,1)$ we have
$${\arcsin}_{p,q}(M_a(r,s))\leq M_b({\rm arcsin}_{p,q}(r),{\rm arcsin}_{p,q}(s)).$$
\end{theorem}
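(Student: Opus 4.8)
The plan is to run the differentiability criterion recalled in the introduction. Since $\arcsin_{p,q}$ is differentiable, it is $(a,b)$-convex on $(0,1)$ precisely when the auxiliary function
\[
  g(x)=x^{1-a}\,\arcsin_{p,q}'(x)\,\big[\arcsin_{p,q}(x)\big]^{b-1}
      =x^{1-a}\,(1-x^{q})^{-1/p}\,\big[\arcsin_{p,q}(x)\big]^{b-1}
\]
is increasing on $(0,1)$, where I used $\arcsin_{p,q}'(x)=(1-x^{q})^{-1/p}$ from the integral definition. Writing $f=\arcsin_{p,q}$ and differentiating $\log g$, the sign of $g'$ is governed by
\[
  \frac{x\,g'(x)}{g(x)}=(1-a)+\frac{q\,x^{q}}{p\,(1-x^{q})}+(b-1)\,\frac{x\,f'(x)}{f(x)},
\]
so everything reduces to controlling the elasticity $\phi(x)=xf'(x)/f(x)$. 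The key estimate will be the two-sided bound $1\le\phi(x)\le(1-x^{q})^{-1/p}$ on $(0,1)$, both halves of which follow from the monotonicity of the integrand $t\mapsto(1-t^{q})^{-1/p}$: comparing $f(x)=\int_0^x(1-t^{q})^{-1/p}\,dt$ with its value at $t=0$ gives $f(x)\ge x$, hence $\phi\le f'=(1-x^{q})^{-1/p}$, while comparing with the value at $t=x$ gives $f(x)\le xf'(x)$, hence $\phi\ge1$.

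From here I would split according to the sign of $b-1$. When $b\ge1$ (which inside the hypotheses can occur only in the first regime $a\le0$), all three factors of $g$ are positive and increasing — $x^{1-a}$ because $a\le0\le1$, $(1-x^{q})^{-1/p}$ because $1-x^{q}$ decreases, and $f^{b-1}$ because $f$ increases and $b-1\ge0$ — so $g$ is increasing and the claim is immediate. The substantive case is $b\le1$. Then $b-1\le0$, so the upper elasticity bound becomes $(b-1)\phi\ge(b-1)(1-x^{q})^{-1/p}$, and the substitution $u=(1-x^{q})^{-1/p}\in[1,\infty)$ (under which $1-x^{q}=u^{-p}$ and $x^{q}/(1-x^{q})=u^{p}-1$) reduces the required inequality $xg'/g\ge0$ to the single-variable inequality
\[
  \tilde H(u)=(1-a)+\tfrac{q}{p}\,(u^{p}-1)+(b-1)\,u\ge0,\qquad u\ge1.
\]
One checks that $\tilde H(1)=b-a$, which is nonnegative under the hypotheses (this is exactly where $a\le b$ is used, and it holds automatically when $b\ge0\ge a$), while $\tilde H'(u)=q\,u^{p-1}+(b-1)$ is increasing and nonnegative for $u\ge1$ because $p,q>1$; hence $\tilde H\ge\tilde H(1)\ge0$. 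The stated specialization to $p=q$ is then just the case $q=p$.

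The main obstacle is the case $b\le1$, where the factor $f^{b-1}$ is decreasing and the naive "product of increasing factors" argument collapses. Everything hinges on the sharpness of the elasticity bound $\phi(x)\le(1-x^{q})^{-1/p}$ as $x\to1$: the strong singularity $(1-x^{q})^{-1}$ produced by the middle term must dominate the weaker negative singularity $(1-x^{q})^{-1/p}$, which is guaranteed by $1>1/p$. The substitution $u=(1-x^{q})^{-1/p}$ is precisely what renders this domination transparent and converts the whole problem into the elementary monotonicity of the polynomial $\tilde H$; I expect the care needed in justifying the elasticity estimate (and in reading off that the endpoint value $\tilde H(1)=b-a$ forces $a\le b$) to be the only delicate points.
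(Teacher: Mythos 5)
Your core tool is exactly the paper's (the criterion of Lemma \ref{bari lemma 3}, that $(a,b)$-convexity is equivalent to monotonicity of $x\mapsto x^{1-a}\varphi'(x)\varphi(x)^{b-1}$), and in the regime $0<a\le b\le 1$ your argument is correct and complete: $\arcsin_{p,q}(x)\ge x$ gives the elasticity bound $\phi(x)\le (1-x^q)^{-1/p}$, and with $u=(1-x^q)^{-1/p}$ your minorant $\tilde H(u)=(1-a)+\tfrac{q}{p}(u^p-1)+(b-1)u$ satisfies $\tilde H(1)=b-a\ge 0$ and $\tilde H'(u)\ge q-1+b>0$ for $b>0$. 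It is worth noting that this actually fills a hole in the paper: the paper's own proof treats the cases $a\le 0$ (via Lemma \ref{bari theorem 7}), $1\le a\le b$, $a\le 1\le b$, and $b\ge\max(a,1)$ (its quantity $\Delta$ is literally your elasticity, but it only exploits $\Delta'\ge 0$, which needs $b\ge 1$), yet it never addresses the announced case $0<a\le b<1$; your substitution handles precisely that case.

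There is, however, a genuine gap in the first regime, where the hypotheses are $a\le 0$ and \emph{arbitrary} $b\in\mathbb{R}$. Both of your parenthetical claims fail there for negative $b$: $\tilde H(1)=b-a$ need not be nonnegative, since the hypotheses allow $b<a\le 0$, and $\tilde H'(u)=qu^{p-1}+(b-1)$ is negative at $u=1$ whenever $b<1-q$. So your proof covers $a\le 0$ only for $b\ge \max(a,1-q)$ (plus $b\ge 1$). The deeper point is that this gap cannot be closed, because the statement of Theorem \ref{nthm1} is itself false in the missing region: since $\phi(x)\to 1$ as $x\to 0^+$, the elasticity of $g$ tends to $b-a$, so for $b<a$ the function $g$ is strictly decreasing near $0$ and, by the same if-and-only-if criterion, $(a,b)$-convexity fails. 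Concretely, for $p=q=2$, $a=0$, $b=-1$, $r=0.1$, $s=0.2$ one has $\arcsin\sqrt{rs}\approx 0.1419$, while the harmonic mean of $\arcsin r$ and $\arcsin s$ is $\approx 0.1338$. Even $b\ge a$ does not rescue all of $b<1-q$: for $p=q=2$ and $a=b=-3$ the elasticity is $\approx -0.077<0$ at $x=1/2$. The paper's own proof of this regime rests on the quoted Lemma \ref{bari theorem 7}(c), whose application here therefore also produces a false conclusion; your computation of $\tilde H(1)$ in fact exposes the correct necessary condition $b\ge a$. You should therefore claim the result only for $a\le b$ with $b\le 1$ and $b\ge 1-q$, together with $a\le 0\,$, $b\ge 1$, rather than asserting the theorem as printed.
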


\begin{theorem}\label{nthm2} If $p,q>1$, $a\leq 0\geq b$ or $0< b\leq a$ and $a\leq 1$, then ${\rm arsinh}_{p,q}$ is
$(a,b)$-concave on $(0,\infty),$ and in particular if $p=q$, then the function ${\rm arsinh}_{p}={\rm arsinh}_{p,p}$ is $(a,b)$-concave on $(0,\infty).$ In other words, if $p,q>1$, $a\leq 0\geq b$ or $0< b\leq a$ and $a\leq 1$, then for all $r,s\in(0,\infty)$ we have
$${\rm arsinh}_{p,q}(M_a(r,s))\geq M_b({\rm arsinh}_{p,q}(r),{\rm arsinh}_{p,q}(s)).$$
\end{theorem}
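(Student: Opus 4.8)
The plan is to use the differentiability criterion recalled in the Introduction. Writing $f={\rm arsinh}_{p,q}$, the function $f$ is $(a,b)$-concave on $(0,\infty)$ if and only if $g(x)=x^{1-a}f'(x)[f(x)]^{b-1}$ is decreasing there. Since $f'(x)=(1+x^q)^{-1/p}>0$ and $f>0$, the function $g$ is positive and smooth, so it is equivalent to prove $(\log g)'\le 0$. Differentiating logarithmically and using the identity $x f''(x)=-u(x)f'(x)$, which comes straight from $f''/f'=-\tfrac{q}{p}\,x^{q-1}/(1+x^q)$, I would record the clean expression
\begin{equation*}
x\,(\log g)'(x)=(1-a)-u(x)+(b-1)\,v(x),\qquad u(x)=\frac{q}{p}\,\frac{x^{q}}{1+x^{q}},\quad v(x)=\frac{x f'(x)}{f(x)}.
\end{equation*}
Everything then reduces to controlling the two auxiliary functions $u$ and $v$; note that $u$ increases from $0$ to $q/p$ on $(0,\infty)$.

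The two elementary facts I would isolate as a lemma are $0<v(x)\le 1$ and $u(x)+v(x)\ge 1$ for all $x>0$; the latter is precisely geometric concavity, i.e. the $(0,0)$-case. The bound $v\le 1$ is immediate because $f'$ is decreasing, so $f(x)=\int_0^x f'(t)\,dt\ge x f'(x)$. For $u+v\ge 1$ I would introduce the auxiliary function $h(x)=xf'(x)-(1-u(x))f(x)$. Then $h(0)=0$, and since $xf''=-uf'$ a cancellation leaves $h'(x)=u'(x)f(x)>0$, whence $h\ge 0$ on $(0,\infty)$; rearranging gives $u+v-1=h/f\ge 0$. This lemma is the technical heart of the argument, and the role of $h$ is exactly to sidestep the fact that the naive logarithmic derivative of $v$ is not obviously signed.

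With the lemma in hand the theorem follows from a one-line estimate. Replacing $-u$ by its upper bound $v-1$ in the displayed expression yields
\begin{equation*}
x\,(\log g)'(x)\le (1-a)+(v(x)-1)+(b-1)v(x)=b\,v(x)-a.
\end{equation*}
It then only remains to check that the hypotheses on $(a,b)$ force $b\,v(x)-a\le 0$, and this is where the proof splits along the two stated ranges: in the range with $b\le 0\le a$ one has $bv\le 0\le a$ since $v>0$, while in the range $0<b\le a\le 1$ the bound $v\le 1$ gives $bv\le b\le a$. In either case $(\log g)'\le 0$, so $g$ is decreasing and $f$ is $(a,b)$-concave; the displayed inequality for $M_a,M_b$ is just a restatement, and the assertion for $p=q$ is the specialization $q=p$. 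The main obstacle is the lemma $u+v\ge 1$, and I would emphasize that the two parameter regions are exactly those for which the terminal inequality $bv\le a$ holds, which explains why the statement is phrased as a dichotomy.
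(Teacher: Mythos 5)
Your proof is correct, and it takes a genuinely different route from the paper's. The paper argues in three separate steps: it first proves geometric concavity of the integrand $t\mapsto(1+t^q)^{-1/p}$ and transfers $(0,0)$-concavity to ${\rm arsinh}_{p,q}$ by invoking the black-box Lemma \ref{bari theorem 7} from \cite{bari}; it then obtains the regime $a\ge 0\ge b$ by writing $x^{1-a}\varphi'(x)\varphi(x)^{b-1}$ as a product of the three positive decreasing functions $x\varphi'(x)/\varphi(x)$, $x^{-a}$ and $\varphi(x)^{b}$; and it finally derives $(a,b)$-concavity for $b\le a$ with $a\ge 1$ from the $(a,a)$-concavity of Theorem \ref{thm1} together with the monotonicity of the power mean in its order. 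You instead work directly with the logarithmic derivative of the criterion function, isolate the two bounds $v\le 1$ and $u+v\ge 1$ (the latter is exactly $(0,0)$-concavity, since $xv'/v=1-u-v$, and you prove it self-containedly via $h=xf'-(1-u)f$ with $h'=u'f>0$ rather than by citation), and then dispatch both parameter regimes at once through the single estimate $x(\log g)'\le bv-a$. Your route buys two things: it is entirely self-contained, and it covers the whole range $0<b\le a$ with no restriction $a\le 1$ --- in particular it reaches the statement's regime $0<b\le a\le 1$, which the paper's own proof (establishing $b\le a$ only for $a\ge 1$) does not literally cover, so your argument in fact closes a gap between the paper's statement and its proof. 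What the paper's route buys is reusable machinery: the same lemmas drive Theorems \ref{nthm1} and \ref{nthm3}.

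One remark on the first regime: like the paper's proof, you establish concavity for $a\ge 0\ge b$, whereas the theorem's text literally reads ``$a\le 0\ge b$''. This must be a typo in the statement, not a deficiency of your proof: for $b>a$ the criterion function behaves like $x^{b-a}$ near the origin, hence is increasing there, and for $p=q$ and $a<0$ one has $x(\log g)'(x)\to -a>0$ as $x\to\infty$ since $u\to 1$ and $v\to 0$, so $(a,b)$-concavity genuinely fails for $a<0$. Your silent correction to $b\le 0\le a$ is the right reading, and it would be worth stating explicitly that this is what you prove.
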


It is worth to mention that Chu et al. \cite{cjw} very recently proved implicitly the $(p_1,0)$-convexity of $\arcsin_{p,q}$ for $p_1\leq 0,$ $p,q>1,$ and the $(p_2,0)$-concavity of ${\rm arcsinh}_{p,q}$ for $p_2\geq0,$ $p,q>1.$ Our approach is similar but somewhat easier, since we applied in the above cases directly the results from \cite{bari}. We also mention that recently Jiang and Qi \cite{jq} proved that the $\arcsin_{p,q}$ is $(0,0)$-convex, by using the same idea as we used in this paper. See also \cite{barbarkvuo,bv} for more details.

Now, observe that by using the change of variable $u=(1-t^q)^{1/p}$
$${\rm arccos}_{p,q}(x)={\rm arcsin}_{p,q}(y)=\int_0^y(1-t^p)^{-1/p}dt,\quad {\rm where}\quad y=(1-x^p)^{1/q},$$
can be written as
$${\rm arccos}_{p,q}(x)=\frac{p}{q}\int_x^1f(u)du, \quad \mbox{where}\quad f(u)=(1-u^p)^{1/q-1}u^{p-2}.$$
Consequently, we have
$$\int_0^xf(u)du=\frac{q}{p}{\arccos}_{p,q}(0)-\frac{q}{p}{\arccos}_{p,q}(x)=\frac{q}{p}{\arcsin}_{p,q}(1)-\frac{q}{p}{\arccos}_{p,q}(x).$$

The next result is about this integral.

\begin{theorem}\label{nthm3}
The function $x\mapsto \pi_{p,q}/2-{\rm arccos}_{p,q}(x)$ is $(a,b)$-convex on $(0,1)$ if $p\in(1,2],$ $q>1,$ $a<0$ and
$b\in\mathbb{R}$ or if $p,q>1,$ $a\leq 0$ and $b\geq 0.$ In other words, if $p\in(1,2],$ $q>1,$ $a<0$ and
$b\in\mathbb{R}$ or if $p,q>1,$ $a\leq 0$ and $b\geq 0,$ then for all $r,s\in(0,1)$ we have
$$\pi_{p,q}/2-\arccos_{p,q}(M_a(r,s))\leq M_b(\pi_{p,q}/2-\arccos_{p,q}(r),\pi_{p,q}/2-\arccos_{p,q}(s)).$$
\end{theorem}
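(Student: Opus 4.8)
The plan is to study $g(x):=\pi_{p,q}/2-\arccos_{p,q}(x)$ via the integral representation obtained just before the statement, namely $g(x)=\frac pq\int_0^x f(u)\,du$ with $f(u)=(1-u^p)^{1/q-1}u^{p-2}$, so that $g'(x)=\frac pq f(x)>0$ on $(0,1)$. By the differentiability criterion recalled before Theorem~\ref{thm1} (taken from \cite{bari}), $g$ is $(a,b)$-convex on $(0,1)$ if and only if $H(x):=x^{1-a}g'(x)[g(x)]^{b-1}$ is increasing; as $H>0$ this is the same as $(\log H)'\ge0$. Logarithmic differentiation gives
\[
\frac{H'(x)}{H(x)}=\frac{p-1-a}{x}+\Big(1-\frac1q\Big)\frac{p\,x^{p-1}}{1-x^p}+(b-1)\,\frac{g'(x)}{g(x)}.
\]
Under $p,q>1$ and $a\le0$ the first two summands are nonnegative, so everything hinges on the last one, whose sign is governed by that of $b-1$ and by the size of $g'/g$.

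The key step, which I would prove first as a lemma, is that $x\mapsto \dfrac{x\,g'(x)}{g(x)}$ is increasing on $(0,1)$ with $\lim_{x\to0^+}\frac{xg'(x)}{g(x)}=p-1$. This yields simultaneously the lower bound $\frac{g'}{g}\ge\frac{p-1}{x}$ and, since the monotonicity of $\frac{xg'}{g}$ is exactly $\frac{d}{dx}\log\frac{xg'}{g}\ge0$, the upper bound $\frac{g'}{g}\le\frac{p-1}{x}+(1-\frac1q)\frac{p x^{p-1}}{1-x^p}$. To prove the lemma I would pass to $z=x^p$, writing $g(x)=\frac1q B\big(x^p;1-\tfrac1p,\tfrac1q\big)$ in terms of the incomplete Beta function, and then combine $B(z;\alpha,\beta)=\frac{z^\alpha}{\alpha}F(\alpha,1-\beta;\alpha+1;z)$ with Euler's transformation to obtain the closed form
\[
\frac{g(x)}{x\,g'(x)}=\frac{1-x^p}{p-1}\,F\Big(1,\,1-\tfrac1p+\tfrac1q;\,2-\tfrac1p;\,x^p\Big).
\]
Writing $F(1,A;C;x^p)=\sum_{n\ge0}c_n x^{pn}$ with $A=1-\frac1p+\frac1q$, $C=2-\frac1p$, one has $c_n/c_{n-1}=(A+n-1)/(C+n-1)$ and $A-C=\frac1q-1<0$, so $c_n<c_{n-1}$; hence $(1-z)\sum_n c_n z^n$ has a positive constant term and nonpositive higher coefficients and is therefore decreasing, i.e. $\frac{g}{xg'}$ is decreasing and $\frac{xg'}{g}$ is increasing. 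This lemma is the step I expect to be the main obstacle: it is where the special structure of $f$ and the sign $1/q<1$ (i.e. $q>1$) are genuinely used, whereas the remainder is sign bookkeeping.

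Granting the lemma, the regime $p,q>1$, $a\le0$, $b\ge0$ follows cleanly. If $b\ge1$ then $(b-1)g'/g\ge0$ and, since $a\le0\le p-1$, all three summands of $H'/H$ are nonnegative. If $0\le b<1$ I would insert the upper bound for $g'/g$ into the (now negative) last term: using $b-1<0$,
\[
\frac{H'(x)}{H(x)}\ge\frac{(p-1)b-a}{x}+b\Big(1-\frac1q\Big)\frac{p x^{p-1}}{1-x^p},
\]
and both summands are nonnegative because $a\le0$ and $b\ge0$; hence $H$ is increasing and the asserted inequality $\pi_{p,q}/2-\arccos_{p,q}(M_a(r,s))\le M_b(\cdots)$ holds. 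For the remaining regime $p\in(1,2]$, $q>1$, $a<0$ the same mechanism is used, but the bookkeeping is genuinely more delicate, because the coefficient of $1/x$ produced above equals $(p-1)b-a$ and must be tracked together with the strict slack $p-1-a>p-1$ coming from $a<0$ and the lower bound $\frac{g'}{g}\ge\frac{p-1}{x}$; the restriction $p\le2$ is what keeps the exponent $p-2$ governing the behaviour of $f$ near $0$ under control in this estimate. I would treat this regime last, since it is where the interplay of the three parameters is tightest.
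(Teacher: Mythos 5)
Your treatment of the regime $p,q>1$, $a\le 0$, $b\ge 0$ is correct, and it takes a genuinely different route from the paper. Your key lemma is a self-contained proof of the geometric convexity of $g(x)=\pi_{p,q}/2-\arccos_{p,q}(x)$: the substitution $z=u^p$ does give $g(x)=\tfrac1q B\bigl(x^p;1-\tfrac1p,\tfrac1q\bigr)$, and the Beta--hypergeometric identity plus Euler's transformation yield your closed form for $g/(xg')$, where $A-C=\tfrac1q-1<0$ indeed forces the coefficients of $(1-z)F(1,A;C;z)$ to be $c_0>0$ followed by negative ones, so $xg'/g$ increases from $p-1$. The paper reaches the same geometric convexity indirectly, by checking $(0,0)$-convexity of the density $f(t)=(1-t^p)^{1/q-1}t^{p-2}$ and invoking Lemma \ref{bari theorem 7}; it then disposes of the whole regime $a\le0$, $b\ge0$ in one line, writing $x^{1-a}\lambda'(x)\lambda(x)^{b-1}=\bigl(x\lambda'(x)/\lambda(x)\bigr)\cdot x^{-a}\cdot\lambda(x)^{b}$, a product of three positive increasing functions (Lemma \ref{bari lemma 3}). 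Once you have your lemma, that same product trick covers all $b\ge0$ at once, so your case split $b\ge1$ versus $0\le b<1$ with the upper bound on $g'/g$ is valid but unnecessary; what your approach buys is independence from Baricz's integral theorem, plus the sharp two-sided bounds $\tfrac{p-1}{x}\le \tfrac{g'}{g}\le\tfrac{p-1}{x}+\bigl(1-\tfrac1q\bigr)\tfrac{px^{p-1}}{1-x^p}$, which the paper never makes explicit.

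The genuine gap is the first regime, $p\in(1,2]$, $q>1$, $a<0$, $b\in\mathbb{R}$: for $b<0$ you offer only a programmatic sketch, and your proposed mechanism cannot be completed --- for a reason your own expansion already exposes. Since $g'/g=\tfrac{p-1}{x}\bigl(1+O(x^p)\bigr)$ near $0$, one has $H'/H=\tfrac{(p-1)b-a}{x}+O(x^{p-1})$, which is negative for small $x$ whenever $b<a/(p-1)$; so $H$ is not increasing and $g$ is not $(a,b)$-convex near $0$, and the restriction $p\le2$ does not repair this (it only controls the sign of the $a(p-2)$ term in the integrand's $(a,0)$-convexity, not this boundary behaviour). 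In fact the statement itself is false for $b$ sufficiently negative: for a strictly increasing $g$ and finite $a$, $(a,b)$-convexity for all $b\in\mathbb{R}$ is impossible, because $M_b(g(r),g(s))\to\min(g(r),g(s))=g(\min(r,s))<g(M_a(r,s))$ as $b\to-\infty$. Concretely, $p=q=2$ gives $g=\arcsin$, and with $a=-1$, $b=-10$, $r=0.1$, $s=0.2$ one computes $\arcsin(M_{-1}(r,s))\approx 0.134$ while $M_{-10}(\arcsin r,\arcsin s)\approx 0.107$, violating the claimed inequality. The paper's own proof of this regime rests on the second sentence of Lemma \ref{bari theorem 7}(c) (the ``$b<0$'' conclusion), and that step is unsound as applied here; your near-zero analysis shows $b\ge a/(p-1)$ is at least necessary. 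So the regime you deferred is not merely delicate but unprovable as stated, whereas for the part of the theorem that is true ($p,q>1$, $a\le0$, $b\ge0$) your argument is complete and correct.
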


It is worth to mention that if in the above theorem we take $a=b=0,$ then we have that the function $x\mapsto {\rm arcsin}_{p,q}(1)-{\rm arccos}_{p,q}(x)$ is geometrically convex on $(0,1)$ for $p,q>1$, and in particular if $p=q,$ then the function $x\mapsto {\rm arcsin}_{p,p}(1)-{\rm arccos}_{p,p}(x)$ is geometrically convex on $(0,1)$ for $p>1$. In other words, for $p,q>1$ and $r,s\in(0,1)$ we have
$${\pi_{p,q}}/{2}-{\arccos}_{p,q}(\sqrt{rs})\leq \sqrt{(\pi_{p,q}/{2}-{\arccos}_{p,q}(r))(\pi_{p,q}/{2}-{\arccos}_{p,q}(s))}.$$

\section{\bf Preliminary results}

In this section we present some preliminary results which will be used in the proof of the main theorems. The first known result considers the $(p,q)$-trigonometric functions whereas the other results consider properties of real functions.

\begin{lemma}\label{jan}\cite{egl}
For all $p,q>1$ and $x\in(0,\pi_{p,q}/2)$, we have
$$\left( \sin_{p,q}(x)\right)'=\cos_{p,q}(x),$$
$$\left(\cos_{p,q}(x)\right)'=-\frac{p}{q}(\cos_{p,q}(x))^{2-p}(\sin_{p,q}(x))^{q-1},$$
$$\left(\tan_{p,q}(x)\right)'=1+\frac{p\,(\sin_{p,q}(x))^{q}}{q\,(\cos_{p,q}(x))^{p}}.$$
\end{lemma}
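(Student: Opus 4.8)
The plan is to obtain all three identities from the integral definition of $\arcsin_{p,q}$, the inverse function theorem, and the fundamental identity \eqref{thm:sinpq+cospc}. Throughout one works on $x\in(0,\pi_{p,q}/2)$, where $\sin_{p,q}(x)$ and $\cos_{p,q}(x)$ are positive, so that the relation $(\sin_{p,q}(x))^q+(\cos_{p,q}(x))^p=1$ may be used freely with real exponents.

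First I would prove $(\sin_{p,q}(x))'=\cos_{p,q}(x)$. Setting $y=\sin_{p,q}(x)$, so that $x=\arcsin_{p,q}(y)=\int_0^y(1-t^q)^{-1/p}\,dt$, the fundamental theorem of calculus gives $dx/dy=(1-y^q)^{-1/p}$, and the inverse function theorem then gives $dy/dx=(1-y^q)^{1/p}$. By \eqref{thm:sinpq+cospc} we have $(\cos_{p,q}(x))^p=1-y^q$, whence $(1-y^q)^{1/p}=\cos_{p,q}(x)$, which is exactly the first claim.

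Next I would differentiate the relation $(\cos_{p,q}(x))^p=1-(\sin_{p,q}(x))^q$ implicitly and substitute the identity just obtained. This yields
\[
  p(\cos_{p,q}(x))^{p-1}(\cos_{p,q}(x))'=-q(\sin_{p,q}(x))^{q-1}(\sin_{p,q}(x))'=-q(\sin_{p,q}(x))^{q-1}\cos_{p,q}(x),
\]
and solving for $(\cos_{p,q}(x))'$ produces the stated closed form. Finally, applying the quotient rule to $\tan_{p,q}=\sin_{p,q}/\cos_{p,q}$ and inserting the two derivatives already computed gives a numerator of the shape $(\cos_{p,q}(x))^2+\text{const}\cdot(\cos_{p,q}(x))^{2-p}(\sin_{p,q}(x))^q$; dividing through by $(\cos_{p,q}(x))^2$ and simplifying the powers produces the third formula.

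I do not expect a genuine obstacle here: the argument is elementary calculus once the identity \eqref{thm:sinpq+cospc} is available. The only point demanding care is the bookkeeping of the multiplicative constant and of the exponents $2-p$ and $q-1$, together with the role of \eqref{thm:sinpq+cospc} in converting $(1-y^q)^{1/p}$ into $\cos_{p,q}(x)$ and in supplying the relation $(\cos_{p,q}(x))^p=1-(\sin_{p,q}(x))^q$ that is differentiated for the cosine. Tracking these consistently is exactly what turns each raw derivative into the compact form asserted in the lemma.
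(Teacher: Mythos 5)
Your overall strategy is sound, and in fact the paper offers no proof of this lemma at all --- it is quoted from \cite{egl} --- so the standard derivation you outline (invert the integral, use \eqref{thm:sinpq+cospc} to identify $(1-y^q)^{1/p}$ with $\cos_{p,q}(x)$, then differentiate the identity implicitly) is exactly the right route. Your first step is also non-circular within this paper's framework: here \eqref{thm:sinpq+cospc} follows directly from the definition $\arccos_{p,q}(x)=\arcsin_{p,q}((1-x^p)^{1/q})$, not from the derivative formulas, so it is legitimately available, and $(\sin_{p,q}(x))'=(1-\sin_{p,q}(x)^q)^{1/p}=\cos_{p,q}(x)$ is correct.

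There is, however, a genuine problem at precisely the point you dismissed as ``bookkeeping of the multiplicative constant.'' Solving your (correct) displayed equation
\[
  p(\cos_{p,q}(x))^{p-1}(\cos_{p,q}(x))'=-q(\sin_{p,q}(x))^{q-1}\cos_{p,q}(x)
\]
gives
\[
  (\cos_{p,q}(x))'=-\frac{q}{p}(\cos_{p,q}(x))^{2-p}(\sin_{p,q}(x))^{q-1},
\]
with ratio $q/p$, whereas the lemma as printed has $p/q$; likewise your quotient-rule computation yields $(\tan_{p,q}(x))'=1+\frac{q}{p}(\sin_{p,q}(x))^{q}/(\cos_{p,q}(x))^{p}$, again with $q/p$ rather than the printed $p/q$. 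These agree only when $p=q$. So your assertion that the implicit differentiation ``produces the stated closed form'' is false: what your algebra actually shows is that the statement as printed is internally inconsistent for $p\neq q$, since the first formula together with \eqref{thm:sinpq+cospc} forces the factor $q/p$. The printed $p/q$ is a typo relative to the source \cite{egl} (where the factor is $q/p$), consistently propagated into the tangent formula. A careful proof must flag and correct this misprint rather than claim agreement; note that the error is harmless for the rest of the paper, since Lemmas \ref{art} and \ref{kuclem} use only the sign and monotonicity consequences of these derivatives, which do not depend on the value of the positive constant.
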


The following result is known as \emph{the monotone form of l'Hospital's rule}.

\begin{lemma}\label{125}\cite[Theorem 1.25]{avvb}
For $-\infty<a<b<\infty$,
let $f,g:[a,b]\to \mathbb{R}$
be continuous on $[a,b]$, and be differentiable on
$(a,b)$. Let $g'(x)\neq 0$
on $(a,b)$. If $f'/g'$ is increasing
(decreasing) on $(a,b)$, then so are
$$x\mapsto\frac{f(x)-f(a)}{g(x)-g(a)}\quad and \quad x\mapsto \frac{f(x)-f(b)}{g(x)-g(b)}.$$
If $f'/g'$ is strictly monotone,
then the monotonicity in the conclusion
is also strict.
\end{lemma}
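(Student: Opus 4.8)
The plan is to prove the lemma via Cauchy's Mean Value Theorem, reducing the monotonicity of each quotient to the assumed monotonicity of $f'/g'$. I would concentrate on the first quotient $F(x)=\frac{f(x)-f(a)}{g(x)-g(a)}$, the argument for the second one being entirely symmetric.

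First I would observe that the hypothesis $g'\neq 0$ on $(a,b)$ forces $g'$ to have constant sign there: a derivative enjoys the intermediate value property (Darboux's theorem), so were $g'$ to take both signs it would have to vanish somewhere, contradicting the hypothesis. Hence $g$ is strictly monotone on $[a,b]$, which guarantees $g(x)-g(a)\neq 0$ for $x\in(a,b]$ and thereby makes $F$ well defined and, by the quotient rule, differentiable on $(a,b)$.

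Next I would compute, for $x\in(a,b)$,
$$F'(x)=\frac{f'(x)[g(x)-g(a)]-[f(x)-f(a)]g'(x)}{[g(x)-g(a)]^2},$$
so that the sign of $F'(x)$ coincides with the sign of the numerator $N(x)=f'(x)[g(x)-g(a)]-[f(x)-f(a)]g'(x)$. Applying Cauchy's Mean Value Theorem on $[a,x]$ to produce a point $c\in(a,x)$ with $f(x)-f(a)=\tfrac{f'(c)}{g'(c)}[g(x)-g(a)]$, and factoring out $g'(x)$, I would rewrite
$$N(x)=g'(x)\left[\frac{f'(x)}{g'(x)}-\frac{f'(c)}{g'(c)}\right][g(x)-g(a)].$$
Since $c<x$ and $f'/g'$ is increasing, the bracketed difference is $\geq 0$; moreover $g'(x)$ and $g(x)-g(a)$ share the same sign (both positive when $g$ increases, both negative when $g$ decreases), so their product is positive. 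Therefore $N(x)\geq 0$, whence $F$ is increasing; when $f'/g'$ is \emph{strictly} increasing the bracketed difference is strictly positive, yielding strict monotonicity of $F$.

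Finally I would repeat the computation for $G(x)=\frac{f(x)-f(b)}{g(x)-g(b)}$, where Cauchy's theorem now supplies a point $c\in(x,b)$; since $c>x$, the sign of the analogous difference flips in precisely the way needed to again make $G$ increasing. The decreasing case in each statement follows at once by replacing $f$ with $-f$. I expect the only genuine subtlety to lie in the bookkeeping of signs — establishing the constant sign of $g'$ and verifying that $g'(x)$ and $g(x)-g(a)$ always agree in sign — rather than in any deep difficulty, as the engine of the proof is simply Cauchy's Mean Value Theorem.
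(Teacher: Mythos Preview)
The paper does not supply its own proof of this lemma: it is quoted verbatim as \cite[Theorem 1.25]{avvb} and used as a black box, so there is nothing to compare against. Your argument is correct and is in fact the standard proof of the monotone form of l'H\^opital's rule --- Darboux's theorem to fix the sign of $g'$, then Cauchy's Mean Value Theorem to produce the point $c$, and finally the sign bookkeeping you describe. One tiny remark: in your treatment of $G(x)=\dfrac{f(x)-f(b)}{g(x)-g(b)}$ the factors $g'(x)$ and $g(x)-g(b)$ have \emph{opposite} signs (not the same sign as for $F$), which combines with the reversed inequality $\tfrac{f'(x)}{g'(x)}\le \tfrac{f'(c)}{g'(c)}$ (since now $c>x$) to give $N(x)\ge 0$; you allude to this sign flip but it is worth stating explicitly.
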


The following two results consider the convexity of the inverse function.

\begin{lemma}\label{lemku}\cite[Theorem 2]{ku}
Let $J\subset\mathbb{R}$ be an open interval, and let $f:J\to \mathbb{R}$
be strictly monotonic function. Let $f^{-1}:f(J)\to J$ be the inverse to $f$.
If $f$ is convex and increasing, then $f^{-1}$ is concave.
\end{lemma}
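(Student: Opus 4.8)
The plan is to prove concavity of $f^{-1}$ directly from the definition, reducing it to the convexity of $f$ by exploiting that $f$ (and hence $f^{-1}$) is order-preserving. First I would record two preliminary facts. Since $f$ is convex on the open interval $J$, it is continuous there; being strictly monotonic and increasing, it maps $J$ onto an interval $f(J)$, so the domain of $f^{-1}$ is itself an interval and convex combinations of its points again lie in $f(J)$, which makes the concavity inequality meaningful. Moreover, $f^{-1}:f(J)\to J$ is strictly increasing.

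Next, I fix $u,v\in f(J)$ and $\lambda\in[0,1]$, and write $u=f(x)$, $v=f(y)$ with $x,y\in J$. The goal is the concavity inequality
$$f^{-1}(\lambda u+(1-\lambda)v)\ge \lambda f^{-1}(u)+(1-\lambda)f^{-1}(v)=\lambda x+(1-\lambda)y.$$
Because $f$ is increasing, it preserves order, so this inequality is equivalent to the one obtained by applying $f$ to both sides. On the left, $f\bigl(f^{-1}(\lambda u+(1-\lambda)v)\bigr)=\lambda u+(1-\lambda)v$; on the right one gets $f(\lambda x+(1-\lambda)y)$. Hence the claim reduces to
$$\lambda u+(1-\lambda)v\ge f(\lambda x+(1-\lambda)y),$$
which, after substituting $u=f(x)$ and $v=f(y)$, is exactly the convexity inequality $\lambda f(x)+(1-\lambda)f(y)\ge f(\lambda x+(1-\lambda)y)$ guaranteed by the hypothesis.

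I do not anticipate a serious obstacle; the argument is essentially a single order-reversal once the right reformulation is found. The only point requiring a little care is the domain issue raised in the first paragraph: one must confirm that $f(J)$ is an interval so that $\lambda u+(1-\lambda)v$ stays in the domain of $f^{-1}$, and this is precisely where continuity of convex functions on open intervals is used. An alternative route through the differentiable characterization, noting that $(f^{-1})'(y)=1/f'\bigl(f^{-1}(y)\bigr)$ is decreasing because $f'$ is increasing and positive, would also work, but it forces extra regularity hypotheses or a detour through one-sided derivatives, so the definitional argument above is cleaner and matches the full generality of the statement.
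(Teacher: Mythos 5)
Your proof is correct. Note that the paper itself does not prove this lemma at all: it is quoted verbatim from Kuczma \cite[Theorem 2]{ku} and used as a black box, so there is no in-paper argument to compare against. Your direct argument is the standard one and is complete: writing $u=f(x)$, $v=f(y)$, the desired inequality $f^{-1}(\lambda u+(1-\lambda)v)\ge \lambda x+(1-\lambda)y$ is, by strict monotonicity of $f$, equivalent to $\lambda f(x)+(1-\lambda)f(y)\ge f(\lambda x+(1-\lambda)y)$, which is exactly convexity. You also correctly handle the one genuinely delicate point, namely that $f(J)$ must be an interval for the convex combination $\lambda u+(1-\lambda)v$ to lie in the domain of $f^{-1}$; invoking continuity of a convex function on an open interval is the right way to secure this, and it is where the hypothesis that $J$ is open earns its keep. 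Your closing remark about the differentiable shortcut $(f^{-1})'(y)=1/f'(f^{-1}(y))$ is also well judged: that route needs extra regularity (or one-sided derivatives) that the statement does not assume, so the definitional argument you chose matches the generality of the lemma.
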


\begin{lemma}\label{lemmr}\cite[Proposition 2]{m}
Let $f \colon (a,b) \to (c,d) = f(a,b) \subset \mathbb{R}$ be a convex function and let $f^{-1} \colon (c,d) \to \mathbb{R}$ be its inverse. If $f$ is increasing (decreasing), then $f^{-1}$ is increasing (decreasing) and concave (convex).
\end{lemma}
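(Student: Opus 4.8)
The plan is to separate the two assertions---monotonicity and convexity/concavity---and to reduce the second one to the defining inequality of convexity of $f$ combined with the order behaviour of $f^{-1}$. First I would record that, since $f$ possesses an inverse, it must be injective, so the hypothesis that $f$ is increasing (decreasing) forces it to be strictly increasing (decreasing); consequently $f^{-1}$ is strictly increasing (decreasing) as well, which settles the monotonicity part. I would also note that $(c,d)=f(a,b)$ is an interval (a convex monotone, hence continuous, function maps an interval onto an interval), so convex combinations of points of $(c,d)$ remain in $(c,d)$ and every expression below is well defined.

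For the increasing case I would start from the convexity of $f$: for $x,y\in(a,b)$ and $\lambda\in[0,1]$,
$$f(\lambda x+(1-\lambda)y)\leq \lambda f(x)+(1-\lambda)f(y).$$
Applying the increasing map $f^{-1}$ to both sides preserves the inequality, giving
$$\lambda x+(1-\lambda)y\leq f^{-1}\!\left(\lambda f(x)+(1-\lambda)f(y)\right).$$
Writing $u=f(x)$ and $v=f(y)$, so that $x=f^{-1}(u)$ and $y=f^{-1}(v)$, this becomes
$$\lambda f^{-1}(u)+(1-\lambda)f^{-1}(v)\leq f^{-1}\!\left(\lambda u+(1-\lambda)v\right),$$
which is precisely the concavity of $f^{-1}$ on $(c,d)$. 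For the decreasing case the same computation applies, except that $f^{-1}$ is now order-reversing: applying it to the convexity inequality of $f$ flips the inequality, and after the substitution $u=f(x)$, $v=f(y)$ one obtains $f^{-1}(\lambda u+(1-\lambda)v)\leq \lambda f^{-1}(u)+(1-\lambda)f^{-1}(v)$, i.e. the convexity of $f^{-1}$.

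I do not expect a genuine obstacle here: the whole argument is the interplay between the convexity inequality for $f$ and the monotone (order-preserving or order-reversing) nature of $f^{-1}$. The only points that need a word of care are that the inverse exists (guaranteed by strict monotonicity) and that the convex combination $\lambda u+(1-\lambda)v$ lies in the domain of $f^{-1}$ (guaranteed because $(c,d)$ is an interval). As a differentiable sanity check, when $f\in C^2$ one has $(f^{-1})''(y)=-f''(x)/(f'(x))^3$ at $y=f(x)$; since $f''\geq 0$ by convexity, this is $\leq 0$ when $f'>0$ (giving concavity) and $\geq 0$ when $f'<0$ (giving convexity), in agreement with the stated conclusions. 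The inequality argument above is nevertheless preferable, since it requires no smoothness and covers Lemma \ref{lemku} as the special increasing case.
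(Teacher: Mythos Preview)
The paper does not supply its own proof of this lemma; it is quoted as a known result from \cite[Proposition 2]{m}. Your argument is correct and is the standard one: apply the order-preserving (resp.\ order-reversing) inverse $f^{-1}$ to the convexity inequality for $f$ and substitute $u=f(x)$, $v=f(y)$ to obtain concavity (resp.\ convexity) of $f^{-1}$; the remarks about strict monotonicity, the image being an interval, and the $C^2$ sanity check are all in order.
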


The following two results consider the $(p,q)$-convexity of the function.

\begin{lemma}\cite[Lemma 3]{bari}\label{bari lemma 3}
  Let $a,b \in \mathbb{R}$ and let $f \colon [\alpha,\beta] \to (0,\infty)$ be a differentiable function for $\alpha,\beta \in (0,\infty)$. The function $f$ is $(a,b)$-convex ($(a,b)$-concave) if and only if $x \mapsto x^{1-a} f'(x) (f(x))^{b-1}$ is increasing (decreasing).
\end{lemma}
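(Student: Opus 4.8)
\emph{Proof proposal.} The plan is to reduce $(a,b)$-convexity to ordinary (midpoint) convexity by a pair of substitutions that linearize the power means, and then to read off the derivative criterion from the standard fact that a differentiable function is convex if and only if its derivative is increasing.

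First I would introduce the transform $\psi_c$ defined on $(0,\infty)$ by $\psi_c(t)=t^c$ for $c\neq 0$ and $\psi_0(t)=\ln t$. The point of this transform is that it converts the power mean $M_c$ into the arithmetic mean: for all $x,y>0$ one checks directly that $\psi_c(M_c(x,y))=\tfrac12\big(\psi_c(x)+\psi_c(y)\big)$, both in the case $c\neq 0$ (where $\psi_c(M_c(x,y))=(M_c(x,y))^c=\tfrac12(x^c+y^c)$) and in the case $c=0$ (where $\psi_0(\sqrt{xy})=\tfrac12(\ln x+\ln y)$). Note that $\psi_c$ is a continuous strictly monotone bijection onto its image, increasing when $c\geq 0$ and decreasing when $c<0$.

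Second, I would set $g=\psi_b\circ f\circ\psi_a^{-1}$ on the interval $\psi_a([\alpha,\beta])$ and rewrite the defining inequality. Writing $u=\psi_a(x)$ and $v=\psi_a(y)$, we have $M_a(x,y)=\psi_a^{-1}(\tfrac{u+v}{2})$, so the condition $f(M_a(x,y))\leq M_b(f(x),f(y))$ becomes, after applying $\psi_b$ to both sides, a comparison of $g(\tfrac{u+v}{2})$ with $\tfrac12(g(u)+g(v))$. Because $\psi_b$ is increasing for $b\geq 0$ and decreasing for $b<0$, the inequality is preserved or reversed accordingly, so $(a,b)$-convexity of $f$ is equivalent to midpoint convexity of $g$ when $b\geq 0$ and to midpoint concavity of $g$ when $b<0$. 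Since $f$ is differentiable and $\psi_a,\psi_b$ are smooth, $g$ is continuous, and midpoint convexity (concavity) of a continuous function is equivalent to convexity (concavity). Then, computing $g'$ by the chain rule with $x=\psi_a^{-1}(u)$, one finds that $g'(u)$ is a nonzero constant multiple of $h(x):=x^{1-a}f'(x)(f(x))^{b-1}$; explicitly $g'(u)=\tfrac{b}{a}h(x)$ when $a,b\neq 0$, while the analogous logarithmic computations give $g'(u)=b\,h(x)$ when $a=0$ and $g'(u)=\tfrac1a h(x)$ when $b=0$. In every case the proportionality constant is nonzero, and the map $x\mapsto u=\psi_a(x)$ is increasing for $a\geq 0$ and decreasing for $a<0$.

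Finally, I would invoke the standard criterion that a differentiable convex (concave) function has increasing (decreasing) derivative, and conversely, and combine it with the previous steps. The main, and only genuinely delicate, part is the sign bookkeeping: one must track simultaneously the sign of the constant relating $g'$ to $h$, the monotonicity direction of $u=\psi_a(x)$, and whether $g$ is required to be convex or concave (dictated by the sign of $b$). Carrying out this bookkeeping in each sign regime of $a$ and $b$, including the boundary cases $a=0$ and $b=0$, one finds uniformly that $(a,b)$-convexity of $f$ is equivalent to $h$ being increasing and $(a,b)$-concavity to $h$ being decreasing, which is exactly the claim.
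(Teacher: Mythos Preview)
Your argument is correct: the substitution $g=\psi_b\circ f\circ\psi_a^{-1}$ reduces $(a,b)$-convexity to ordinary convexity (or concavity, depending on the sign of $b$) of $g$, and the chain-rule computation together with the sign bookkeeping in each regime of $(a,b)$ yields precisely the criterion on $h(x)=x^{1-a}f'(x)(f(x))^{b-1}$. Note, however, that the paper does not supply its own proof of this lemma; it is quoted verbatim from \cite[Lemma~3]{bari}, so there is no in-paper argument to compare against. Your proof is the standard one and is essentially what appears in the cited source.
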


\begin{lemma}\cite[Theorem 7]{bari}\label{bari theorem 7}
  Let $\alpha,\beta \in (0,\infty)$ and $f \colon [\alpha,\beta] \to (0,\infty)$ be a differentiable function. Denote $g(x) = \int_\alpha^x f(t) \, dt$ and $h(x) = \int_x^\beta f(t) \, dt$. Then
  
  \noindent (a) If for all $a \in [0,1]$ the function $f$ is $(a,0)$-concave, then the function $g$ is $(a,b)$-concave for all $a \in [0,1]$ and $b \le 0$. If, in addition the function $x \mapsto x^{1-a} f(x)$ is increasing for all $a \in [0,1]$, then $g$ is $(a,b)$-concave for all $a \in [0,1]$ and $b \in (0,1)$. Moreover, if for all $a \in \mathbb{R}$ the function $x \mapsto x^{1-a} f(x)$ is increasing, then $g$ is $(a,b)$-convex for all $a \in \mathbb{R}$ and $b \ge 1$.
  
  \noindent (b) If for all $a \in [0,1]$ the function $f$ is $(a,0)$-concave, then the function $g$ is $(a,b)$-concave for all $a \in [0,1]$ and $b \le 0$. If, in addition the function $x \mapsto x^{1-a} f(x)$ is decreasing for all $a \in [0,1]$, then $g$ is $(a,b)$-concave for all $a \in [0,1]$ and $b \in (0,1)$. Moreover, if for all $a \in \mathbb{R}$ the function $x \mapsto x^{1-a} f(x)$ is decreasing, then $g$ is $(a,b)$-convex for all $a \in \mathbb{R}$ and $b \ge 1$.
  
  \noindent (c) If for all $a \notin (0,1)$ we have $\alpha^{1-a} f(\alpha) = 0$ and the function $f$ is $(a,0)$-convex, then $g$ is $(a,b)$-convex for all $a \notin (0,1)$ and $b \ge 0$. If, in addition the function $x \mapsto x^{1-a} f(x)$ is increasing for all $a \notin (0,1)$, then $g$ is $(a,b)$-convex for all $a \notin (0,1)$ and $b < 0$.
  
  \noindent (d) If for all $a \notin (0,1)$ we have $\beta^{1-a} f(\beta) = 0$ and the function $f$ is $(a,0)$-convex, then $g$ is $(a,b)$-convex for all $a \notin (0,1)$ and $b \ge 0$. If, in addition the function $x \mapsto x^{1-a} f(x)$ is decreasing for all $a \notin (0,1)$, then $g$ is $(a,b)$-convex for all $a \notin (0,1)$ and $b < 0$.
\end{lemma}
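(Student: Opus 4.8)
The plan is to reduce the statement to a single monotonicity assertion and then drive it with the monotone form of l'Hospital's rule (Lemma \ref{125}), rather than with Lemma \ref{bari theorem 7} (whose hypotheses $\alpha^{1-a}f(\alpha)=0$ \emph{for all} $a\notin(0,1)$ actually fail here for large $a$, so it cannot be applied as a blanket statement). Using the identity established just before the theorem, I write $F(x)=\pi_{p,q}/2-\arccos_{p,q}(x)=\tfrac{p}{q}\,g(x)$, where $g(x)=\int_0^x f(u)\,du$ and $f(u)=u^{p-2}(1-u^p)^{1/q-1}$. The function $F$ is positive and increasing on $(0,1)$ with $F(0^+)=0$. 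Since $(a,b)$-convexity is unaffected by multiplication by the positive constant $p/q$, Lemma \ref{bari lemma 3} reduces the claim to showing that
$$G(x)=x^{1-a}f(x)\,g(x)^{b-1}$$
is increasing on $(0,1)$ (the factor $(p/q)^b$ drops out).

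The technical heart is the case $b=0$, from which everything is assembled: I would show that $G_0(x)=x^{1-a}f(x)/g(x)$ is increasing by applying Lemma \ref{125} to $N(x)=x^{1-a}f(x)$ and $D(x)=g(x)$, both of which tend to $0$ as $x\to0^+$ because $a\le0<p-1$. A direct computation gives
$$\frac{N'(x)}{D'(x)}=x^{-a}\left[(p-1-a)+\left(1-\tfrac1q\right)\frac{p\,x^{p}}{1-x^{p}}\right],$$
which is the product of the positive nondecreasing function $x\mapsto x^{-a}$ with the sum of a positive constant and a positive increasing function; hence $N'/D'$ is increasing. Lemma \ref{125} then yields that $G_0=N/D$ is increasing, so $(\log G_0)'\ge0$ on $(0,1)$. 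The same monotonicity gives the two-sided control $p-1\le xf(x)/g(x)\le(p-1-a)+(1-\tfrac1q)\,p\,x^{p}/(1-x^{p})$, which I will need below.

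To assemble the result, write $(\log G)'(x)=(\log G_0)'(x)+b\,f(x)/g(x)$. Using $(\log G_0)'\ge0$ and $f/g>0$, I obtain $(\log G)'\ge0$ whenever $b\ge0$; this settles the regime $p,q>1$, $a\le0$, $b\ge0$ (the second case of the theorem) together with the part $b\ge0$ of the first case, and in particular gives the stated corollary at $a=b=0$ (geometric convexity of $x\mapsto\arcsin_{p,q}(1)-\arccos_{p,q}(x)$). For the remaining regime $p\in(1,2]$, $q>1$, $a<0$, $b<0$ this shortcut fails because $b\,f/g<0$, and one must verify directly the differential inequality
$$(p-1-a)+\left(1-\tfrac1q\right)\frac{p\,x^{p}}{1-x^{p}}+(b-1)\,\frac{xf(x)}{g(x)}\ge0 .$$
The plan here is to feed the upper bound on $xf/g$ from the previous paragraph into the negative correction term, using $p\le2$ to keep the bracket $(p-2)+(1-\tfrac1q)px^{p}/(1-x^{p})$ from being too large where $xf/g$ is large. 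I expect this last step to be the main obstacle: the binding constraint occurs as $x\to0^+$, where the left side tends to $(p-1)b-a$ and all three terms become comparable, so the estimate on $xf/g$ must be sharp precisely there; controlling it uniformly in $x$ for negative $b$ is the delicate point, and it is exactly the hypothesis $p\le2$ that must be exploited to close the inequality away from the endpoints.
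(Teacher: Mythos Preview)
Your write-up does not address the stated Lemma~\ref{bari theorem 7} at all; that lemma is quoted from \cite{bari} and carries no proof in the present paper, so there is nothing to compare against. What you have actually sketched is a proof of Theorem~\ref{nthm3}. Assuming that was the intended target, here is the comparison.

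For the regime $a\le 0$, $b\ge 0$ (in particular the geometric-convexity corollary $a=b=0$) your route is correct and genuinely different from the paper's. The paper checks that the integrand $f(t)=t^{p-2}(1-t^p)^{1/q-1}$ is $(a,0)$-convex and then invokes Lemma~\ref{bari theorem 7}; its final paragraph handles $a\le 0$, $b\ge 0$ by the product decomposition $u\cdot v\cdot w$. You instead apply the monotone l'Hospital rule (Lemma~\ref{125}) to $N(x)=x^{1-a}f(x)$ and $D(x)=g(x)$, obtain that $G_0=N/D$ is increasing, and then conclude via Lemma~\ref{bari lemma 3} and the identity $(\log G)'=(\log G_0)'+b\,f/g$. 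This is more self-contained and, as you note, sidesteps the blanket ``for all $a\notin(0,1)$'' hypothesis of Lemma~\ref{bari theorem 7}(c).

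For the remaining regime $p\in(1,2]$, $q>1$, $a<0$, $b<0$ there is a genuine gap, and your own computation exposes it. You observe that $x(\log G)'(x)\to (p-1)b-a$ as $x\to 0^+$; for $b<a/(p-1)$ this limit is strictly negative, so $G$ is \emph{decreasing} near $0$ and the criterion of Lemma~\ref{bari lemma 3} fails outright. Equivalently, $G(x)\sim C\,x^{(p-1)b-a}\to\infty$ as $x\to 0^+$, which is incompatible with $G$ being increasing on $(0,1)$. Hence the differential inequality you propose to verify is simply false for $b$ sufficiently negative; no sharpening of the bound on $xf/g$, and no use of the hypothesis $p\le 2$, can close it. The ``main obstacle'' you flag is not a technical difficulty but an actual obstruction: an increasing positive function cannot be $(a,b)$-convex for \emph{all} $b\in\mathbb{R}$, since letting $b\to-\infty$ would force $F(M_a(r,s))\le \min(F(r),F(s))=F(\min(r,s))$, contradicting $M_a(r,s)>\min(r,s)$ for $r\ne s$. (The paper's own argument for this sub-case appeals to Lemma~\ref{bari theorem 7}(c), whose hypotheses you correctly point out are not met here; so this is a defect in the stated theorem, not merely in your attempt.)
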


Next we introduce a convexity result for the functions $\tan_{p,q}$ and ${\arctan}_{p,q}$.

\begin{lemma}\label{art}
If $p,q>1$, then the function $\tan_{p,q}$
is increasing and convex on $(0,\pi_{p,q}/2),$ while the function ${\arctan}_{p,q}$
is increasing and concave on $(0,1).$
\end{lemma}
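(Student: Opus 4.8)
The plan is to treat $\tan_{p,q}$ first, working directly from the derivative formulas in Lemma~\ref{jan}, and then transfer both conclusions to $\arctan_{p,q}$ through the inverse-function lemmas.

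First I would record that $\tan_{p,q}$ is increasing. By Lemma~\ref{jan},
$$(\tan_{p,q}(x))'=1+\frac{p\,(\sin_{p,q}(x))^{q}}{q\,(\cos_{p,q}(x))^{p}},$$
and on $(0,\pi_{p,q}/2)$ both $\sin_{p,q}(x)$ and $\cos_{p,q}(x)$ are positive, so this derivative is strictly positive and $\tan_{p,q}$ is increasing.

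Next, to obtain convexity of $\tan_{p,q}$ it is enough to show that its derivative is increasing, i.e.\ that $x\mapsto (\sin_{p,q}(x))^{q}/(\cos_{p,q}(x))^{p}$ is increasing. The point I would exploit is that this quotient factors as a product of two positive increasing functions, which avoids a cumbersome second-derivative computation. Indeed, from Lemma~\ref{jan} the relation $(\sin_{p,q}(x))'=\cos_{p,q}(x)>0$ shows $\sin_{p,q}$ is positive and increasing, hence $(\sin_{p,q})^{q}$ is positive and increasing; and $(\cos_{p,q}(x))'=-\frac{p}{q}(\cos_{p,q}(x))^{2-p}(\sin_{p,q}(x))^{q-1}<0$ shows $\cos_{p,q}$ is positive and decreasing, so $(\cos_{p,q})^{-p}$ is positive and increasing. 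The product of two positive increasing functions is increasing, so $(\tan_{p,q})'$ is increasing and $\tan_{p,q}$ is convex on $(0,\pi_{p,q}/2)$.

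Finally, for $\arctan_{p,q}$ I would invoke the inverse-function results. Since $\tan_{p,q}\colon(0,\pi_{p,q}/2)\to(0,\infty)$ is increasing and convex with inverse $\arctan_{p,q}$, Lemma~\ref{lemmr} applies directly and yields that $\arctan_{p,q}$ is increasing and concave on its domain, and in particular on $(0,1)$; the concavity alone also follows from Lemma~\ref{lemku}. I do not expect a real obstacle in this argument, since the only genuine computation is the derivative of $\tan_{p,q}$, already supplied by Lemma~\ref{jan}. The sole point meriting care is checking that the monotonicity and domain hypotheses of Lemma~\ref{lemmr} are met for the map $\tan_{p,q}\colon(0,\pi_{p,q}/2)\to(0,\infty)$, so that the concavity transfers correctly to $\arctan_{p,q}$.
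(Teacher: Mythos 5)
Your proposal is correct, and it follows the paper's overall skeleton --- monotonicity of $\tan_{p,q}$ from the derivative formula in Lemma~\ref{jan}, then transfer to $\arctan_{p,q}$ via Lemma~\ref{lemmr} --- but it handles the convexity step differently. The paper computes $(\tan_{p,q})''$ explicitly and observes it is positive because $\sin_{p,q}$ and $\cos_{p,q}$ are; you instead show $(\tan_{p,q})'$ is increasing by factoring $(\sin_{p,q})^{q}(\cos_{p,q})^{-p}$ as a product of two positive increasing functions, using $(\sin_{p,q})'=\cos_{p,q}>0$ and $(\cos_{p,q})'<0$ from Lemma~\ref{jan}. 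Your route is slightly more elementary and, as a bonus, sidesteps a small blemish in the paper's computation: the displayed formula for $\left[\tan_{p,q}(x)\right]''$ there retains a spurious additive term $1$ (the constant $1$ in $(\tan_{p,q})'$ should differentiate to $0$), which is harmless for the positivity conclusion but is avoided entirely by your monotonicity argument. Your final step is also sound: $\tan_{p,q}\colon(0,\pi_{p,q}/2)\to(0,\infty)$ is increasing and convex, so Lemma~\ref{lemmr} gives that its inverse $\arctan_{p,q}$ is increasing and concave, in particular on $(0,1)$, exactly as the paper concludes (your remark that Lemma~\ref{lemku} alone already yields the concavity is correct as well).
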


\begin{proof}[\bf Proof] By Lemma \ref{jan} we get
\[
  \left[\tan_{p,q}(x)\right]'' = 1+\frac{p}{q^2}\left[\frac{pq(\sin_{p,q}(x))^{q-1}(\cos_{p,q}(x))^{1+p}+p^2\cos_{p,q}(x)(\sin_{p,q}(x))^{2q-1}}
{(\cos_{p,q}(x))^{2p}}\right],
\]
which is positive since both $\sin_{p,q}$ and $\cos_{p,q}$ are positive. Hence $\tan_{p,q}$ is convex. By Lemma \ref{jan} we observe that the derivative of $\tan_{p,q}$ is positive and thus $\tan_{p,q}$ is increasing. It follows from Lemma \ref{lemmr} that ${\arctan_{p,q}}$ is increasing and concave.
\end{proof}

Now, we focus on some monotonicity results.

\begin{lemma}\label{lem1} Let $a\geq0,$ $p,q>1$ and consider the functions $f,h:(0,1)\to\mathbb{R}$ and $g:(0,\infty)\to \mathbb{R},$ defined by
$$f(x)=\left(\frac{{\arcsin}_{p,q}(x)}{x}\right)^{a}({\arcsin}_{p,q}(x))',$$
$$g(x)=\left(\frac{{\rm arsinh}_{p,q}(x)}{x}\right)^{a}({\rm arsinh}_{p,q}(x))',$$
$$h(x)=\left(\frac{{\arctan}_{p,q}(x)}{x}\right)^{a}({\arctan}_{p,q}(x))'.$$
Then the function $f$ is increasing and the functions $g$ and $h$ are decreasing.
\end{lemma}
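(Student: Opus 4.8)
The plan is to write each of $f$, $g$, $h$ as a product of two positive, monotone factors, and then invoke the elementary fact that a product of two positive increasing (decreasing) functions is again increasing (decreasing).

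First I would record the ingredients. From the integral definitions, $\left({\arcsin}_{p,q}(x)\right)'=(1-x^q)^{-1/p}$ on $(0,1)$ and $\left({\rm arsinh}_{p,q}(x)\right)'=(1+x^q)^{-1/p}$ on $(0,\infty)$, while for $T={\arctan}_{p,q}$ I would use only that $T'>0$ and that $T$ is concave, both supplied by Lemma \ref{art}; in particular no closed form for $T'$ is needed. Next I would note the monotonicity of each derivative: since $p,q>1$ the map $x\mapsto(1-x^q)^{-1/p}$ is increasing on $(0,1)$ (as $x^q$ is increasing and $t\mapsto(1-t)^{-1/p}$ is increasing), the map $x\mapsto(1+x^q)^{-1/p}$ is decreasing on $(0,\infty)$, and $T'$ is decreasing because $T$ is concave. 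Thus $\left({\arcsin}_{p,q}(x)\right)'$ is an increasing factor, while $\left({\rm arsinh}_{p,q}(x)\right)'$ and $T'$ are decreasing factors, all positive.

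Second I would treat the ratio factors. Each of ${\arcsin}_{p,q}$, ${\rm arsinh}_{p,q}$, ${\arctan}_{p,q}$ vanishes at $0$, so for example ${\arcsin}_{p,q}(x)/x$ is exactly the slope of the chord joining the origin to $\bigl(x,{\arcsin}_{p,q}(x)\bigr)$. Because ${\arcsin}_{p,q}$ is convex (its derivative is increasing, as just noted) these chord slopes increase, so $x\mapsto{\arcsin}_{p,q}(x)/x$ is increasing; because ${\rm arsinh}_{p,q}$ and ${\arctan}_{p,q}$ are concave (their derivatives are decreasing) the corresponding chord slopes decrease, so $x\mapsto{\rm arsinh}_{p,q}(x)/x$ and $x\mapsto{\arctan}_{p,q}(x)/x$ are decreasing. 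Equivalently, this is the conclusion of the monotone form of l'Hospital's rule, Lemma \ref{125}, applied with numerator ${\arcsin}_{p,q}$ (respectively ${\rm arsinh}_{p,q}$, ${\arctan}_{p,q}$) and denominator $x$, whose quotient of derivatives is the monotone function identified above.

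Third, since $a\geq 0$, raising a positive increasing (decreasing) function to the power $a$ preserves the direction of monotonicity, so $\left({\arcsin}_{p,q}(x)/x\right)^a$ is increasing while $\left({\rm arsinh}_{p,q}(x)/x\right)^a$ and $\left({\arctan}_{p,q}(x)/x\right)^a$ are decreasing. Finally I would combine the two factors: $f$ is the product of the positive increasing functions $\left({\arcsin}_{p,q}(x)/x\right)^a$ and $\left({\arcsin}_{p,q}(x)\right)'$, hence increasing; $g$ is the product of the positive decreasing functions $\left({\rm arsinh}_{p,q}(x)/x\right)^a$ and $\left({\rm arsinh}_{p,q}(x)\right)'$, hence decreasing; and likewise $h$ is a product of two positive decreasing functions, hence decreasing.

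The only genuine obstacle is the ${\arctan}_{p,q}$ case, since its derivative admits no convenient closed form. I sidestep this by extracting everything I need, namely positivity of $T'$, monotonicity of $T'$, and the chord-slope monotonicity of $T(x)/x$, from the single qualitative statement that ${\arctan}_{p,q}$ is increasing and concave, which is precisely Lemma \ref{art}. For $f$ and $g$ the derivatives are explicit, so verifying the monotonicity of each factor is a one-line computation, and the whole argument reduces to the product-of-monotone-functions principle.
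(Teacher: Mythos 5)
Your proposal is correct and follows essentially the same route as the paper: both factor each function into the ratio part raised to the power $a$ and the derivative part, establish monotonicity of the derivative from the explicit formulas for $\arcsin_{p,q}$ and ${\rm arsinh}_{p,q}$ and from Lemma \ref{art} for $\arctan_{p,q}$, obtain monotonicity of the ratio via Lemma \ref{125}, and conclude by the product-of-positive-monotone-functions principle. Your additional chord-slope interpretation is a nice heuristic gloss but is mathematically the same step the paper carries out with Lemma \ref{125}.
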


\begin{proof}[\bf Proof] It follows from the definition that
\[x\mapsto ({\rm arcsin}_{p,q}(x))'=(1-x^q)^{-1/p},\]
is increasing since
\[
  ({\rm arcsin}_{p,q}(x))''=\frac{q x^{q-1}(1-x^q)^{-1-1/p}}{p} > 0.
\]
By Lemma \ref{125}, the function $x\mapsto({\arcsin}_{p,q}\,x)/x$ is increasing $(0,1)$ and consequently the function $x\mapsto (({\arcsin}_{p,q}\,x)/x)^a$ is increasing too on $(0,1).$ Since the product of increasing functions is increasing, the function $f$ is increasing. Now, the function ${\arctan}_{p,q}$ is increasing and concave $(0,1)$ by Lemma \ref{art}, which implies that $x\mapsto({\rm arctan}_{p,q}(x))'$ is decreasing on $(0,1).$ Using again Lemma \ref{125} it follows that $x\mapsto({\arctan}_{p,q}(x))/x$ is decreasing on $(0,1)$, and hence is the function $h$. Finally, the proof for function $g$ follows similarly as the proof for $f$, because
\[
  ({\rm arsinh}_{p,q}(x))''=\frac{-q x^{q-1}(1+x^q)^{-1-1/p}}{p} < 0.\qedhere
\]
\end{proof}

\begin{lemma}\label{kuclem} If $a\geq 0$ and $p,q>1,$ then the function
$$x\mapsto\left(\frac{{\sin}_{p,q}(x)}{x}\right)^{a}({\sin}_{p,q}(x))'$$
is decreasing on $(0,1),$ while the functions
$$x\mapsto\left(\frac{{\cos}_{p,q}(x)}{x}\right)^{a}({\cos}_{p,q}(x))',\ x\mapsto\left(\frac{{\tan}_{p,q}(x)}{x}\right)^{a}({\tan}_{p,q}(x))'$$
and $$x\mapsto\left(\frac{{\sinh}_{p,q}(x)}{x}\right)^{a}({\sinh}_{p,q}(x))'$$
are increasing on $(0,1).$
\end{lemma}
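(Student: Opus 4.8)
The plan is to prove all four statements by the same mechanism used in the proof of Lemma \ref{lem1}: each function has the shape $\left(f(x)/x\right)^a f'(x)$, a product of the two positive factors $\left(f(x)/x\right)^a$ and $f'(x)$, and a product of two positive functions that are monotone in the same sense is monotone in that sense. Thus for every function it suffices to show that $f'$ and $x \mapsto f(x)/x$ are monotone in the same direction, and here the monotone form of l'Hospital's rule (Lemma \ref{125}) transfers the monotonicity of $f'$ to that of $f(x)/x$, since $f(0)=0$ for $\sin_{p,q}$, $\tan_{p,q}$ and $\sinh_{p,q}$, so that the quotient of derivatives is exactly $f'$.

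I would first dispose of the three cases that fit this pattern. For $\sin_{p,q}$, Lemma \ref{jan} gives $(\sin_{p,q})' = \cos_{p,q}$, which is positive and decreasing; by Lemma \ref{125} the ratio $\sin_{p,q}(x)/x$ is then decreasing, so $(\sin_{p,q}(x)/x)^a$ is decreasing for $a\ge 0$, and the product is decreasing, as claimed. For $\tan_{p,q}$, Lemma \ref{art} says $\tan_{p,q}$ is increasing and convex, so $(\tan_{p,q})'$ is positive and increasing, whence $\tan_{p,q}(x)/x$ is increasing and the product is increasing. For $\sinh_{p,q}$, differentiating ${\rm arsinh}_{p,q}(\sinh_{p,q}(x))=x$ gives $(\sinh_{p,q})'(x)=(1+(\sinh_{p,q}(x))^q)^{1/p}$, positive and increasing because $\sinh_{p,q}$ is increasing; hence $\sinh_{p,q}(x)/x$ is increasing and the product is increasing.

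The main obstacle is the cosine, for which the product device fails: $\cos_{p,q}$ is positive and decreasing, so $x\mapsto(\cos_{p,q}(x)/x)^a$ is decreasing, while by Lemma \ref{jan} the other factor $(\cos_{p,q}(x))'=-\tfrac pq(\cos_{p,q}(x))^{2-p}(\sin_{p,q}(x))^{q-1}$ is negative, so the two factors are monotone in opposite senses and the monotonicity of the product is not decided by the factors alone. Here I would argue directly. Writing $c=\cos_{p,q}(x)$, $s=\sin_{p,q}(x)$ and
\[
  C(x)=\left(\frac{\cos_{p,q}(x)}{x}\right)^{a}(\cos_{p,q}(x))'=-\frac pq\,x^{-a}\,c^{\,a+2-p}\,s^{\,q-1}<0,
\]
the assertion that $C$ is increasing is equivalent to $-C$ being decreasing, which I would test via the logarithmic derivative
\[
  \frac{(-C)'(x)}{-C(x)}=-\frac ax-(a+2-p)\frac pq\,c^{\,1-p}s^{\,q-1}+(q-1)\frac cs,
\]
using $s^{q}+c^{p}=1$ to reduce the target inequality $(-C)'/(-C)\le 0$ to an elementary relation between $c$ and $s$. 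The crux is the behaviour near $x=0$, where $s\sim x$, $c\to 1$, and the competition between the negative term $-a/x$ and the positive term $(q-1)c/s\sim(q-1)/x$ governs the sign; pinning down this balance (and thereby the exact range of $a$ for which the statement holds) is the part of the argument that requires genuine care, the remaining three cases being the routine product estimates above.
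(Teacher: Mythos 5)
Your treatment of $\sin_{p,q}$, $\tan_{p,q}$ and $\sinh_{p,q}$ is correct and is essentially the paper's own proof: the paper reaches the decreasingness of $(\sin_{p,q})'$ via Lemma~\ref{lemku} applied to the convex increasing function $\arcsin_{p,q}$ rather than directly from Lemma~\ref{jan}, and it disposes of the remaining cases with a one-line ``similarly'' that your routine product estimates merely spell out. The divergence is the cosine, and there your caution is vindicated: the paper asserts that $(\cos_{p,q})'$ is increasing and that the assertion follows from Lemma~\ref{125}, and both halves of that sentence fail. Lemma~\ref{125} is inapplicable to $\cos_{p,q}(x)/x$ since $\cos_{p,q}(0)=1\neq 0$ (though that ratio is trivially decreasing), and $(\cos_{p,q})'$ is \emph{not} increasing: differentiating the formula of Lemma~\ref{jan} gives, with $s=\sin_{p,q}(x)$, $c=\cos_{p,q}(x)$,
\[
  (\cos_{p,q})''(x)=\frac{p^2}{q^2}(2-p)\,c^{3-2p}s^{2q-2}-\frac{p}{q}(q-1)\,c^{3-p}s^{q-2},
\]
and near $x=0$ the second term dominates (its order in $s$ is lower by $s^{q}$), so $(\cos_{p,q})''<0$ there for all $p,q>1$; for $p=q=2$ one has simply $(\cos x)'=-\sin x$, strictly decreasing on $(0,1)$. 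One correction to your setup: had the paper's claim been true, the product device would in fact close the cosine case, because a positive decreasing factor times a negative increasing factor \emph{is} increasing ($uv=-\bigl(u\cdot(-v)\bigr)$ with $u$ and $-v$ both positive and decreasing); ``opposite senses'' is not by itself the obstruction --- the obstruction is that $(\cos_{p,q})'$ decreases near the origin.

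Consequently the gap you left open cannot be filled, and your own asymptotics settle it negatively. From your expression $\frac{(-C)'(x)}{-C(x)}=-\frac ax-(a+2-p)\frac pq c^{1-p}s^{q-1}+(q-1)\frac cs$ together with $s=x+O(x^{q+1})$ and $c=1+O(x^q)$ as $x\to 0^+$, one gets $\frac{(-C)'(x)}{-C(x)}=\frac{q-1-a}{x}+O(x^{q-1})$, so for $0\le a<q-1$ the function $C$ is strictly \emph{decreasing} in a right neighbourhood of $0$: the cosine part of the lemma is false as stated (at $a=0$, $p=q=2$ the allegedly increasing function is $-\sin x$ on $(0,1)$). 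The claim can hold only under the restriction $a\ge q-1$ that your ``balance at the origin'' predicts --- e.g.\ at $a=1$, $p=q=2$ it amounts to the increasingness of $-\sin(2x)/(2x)$, which is true --- and the error is inherited by Theorem~\ref{thm2}, whose cosine inequality at $a=1$, $p=q=2$ asserts ordinary convexity of $\cos$ on $(0,1)$, which fails. So your proposal is incomplete as a proof, but it is incomplete because no proof exists in the stated generality: the honest conclusion of your computation is a counterexample and a corrected hypothesis, not a missing estimate, and in this respect it is more reliable than the paper's own argument.
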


\begin{proof}[\bf Proof] Let $f(x)={\arcsin}_{p,q}(x),$ where $x\in(0,1)$. We get
$$f'(x)=\frac{1}{(1-x^q)^{1/p}},$$
which is positive and increasing, hence $f$ is convex. Clearly
$\sin_{p,q}$ is increasing, and by Lemma \ref{lemku} is concave. This
implies that $x\mapsto (\sin_{p,q}(x))'$ is decreasing, and $x\mapsto(\sin_{p,q}
(x))/x$ is also decreasing by Lemma \ref{125}.

Similarly we get that
$x\mapsto(\cos_{p,q}(x))'$, $x\mapsto (\tan_{p,q}(x))'$ and $x\mapsto (\sinh_{p,q}(x))'$ are increasing,
and the assertion follows from Lemma \ref{125}.
\end{proof}

\section{\bf Proofs of the main results}

\begin{proof}[\bf Proof of the Theorem \ref{thm1}.] \rm Let $0<x<y<1$, and $u=((x^a+y^a)/2)^{1/a}>x$. Let us denote $h_1(x) = {\arcsin}_{p,q}(x)$,
$h_2(x) = {\arctan}_{p,q}(x)$, $h_3(x) = {\rm arsinh}_{p,q}(x)$ and for $i\in\{1,2,3\}$ define
\[
  g_i(x)=h_i(u)^a-\frac{h_i(x)^a+h_i(y)^a}{2}.
\]
Differentiating with respect to $x$, we get $du/dx=(1/2)(x/u)^{a-1}$ and
\begin{eqnarray*}
g_i'(x)&=&\frac{1}{2}\,a\,h_i(x)^{a-1}\frac{d}{dx}(h_i(u))\left(\frac{x}{u}\right)^{a-1}
-\frac{1}{2}\,a\,h_i(x)^{a-1}\frac{d}{dx}(h_i(x))\\
&=&\frac{a}{2}x^{a-1}(f_i(u)-f_i(x)),
\end{eqnarray*}
where
\[
  f_i(x)=\left(\frac{h_i(x)}{x}\right)^{a-1}\frac{d}{dx}(h_i(x)).
\]
By Lemma \ref{lem1} $g_1'$ is positive and $g_2',$ $g_3'$ are negative. Hence $g_1$ is increasing and $g_2,$ $g_3$ are decreasing. This implies that
\[
  g_1(x) < g_1(y)=0, \quad g_2(x) > g_2(y)=0, \quad g_3(x) > g_3(y)=0,
\]
and the assertion follows.
\end{proof}

\begin{proof}[\bf Proof of the Theorem \ref{thm2}.]
The proof is similar to the proof of Theorem \ref{thm1} and follows from Lemma \ref{kuclem}.
\end{proof}

\begin{proof}[\bf Proof of Theorem \ref{nthm1}] Let us consider the function $f:(0,1)\to (0,\infty),\,f(t)=(1-t^q)^{-1/p}$.
If $a\leq 0$, then the function
$$t\mapsto\frac{t^{1-a}f'(t)}{f(t)}=\frac{q}{p}\frac{t^{q-a}}{1-t^q}$$
is increasing on $(0,1)$, that is $f$ is $(a,0)$-convex on $(0,1)$, according to Lemma \ref{bari lemma 3}.
Since $t^{1-a}(1-t^q)^{-1/p}\to 0$, as $t\to 0$, and $t\to t^{1-a}(1-t^q)^{-1/p}$ is increasing for $a\leq 0$ as a product of two increasing and positive functions from Lemma \ref{bari theorem 7}, we deduce that $x\mapsto {\rm arcsin}_{p,q}(x)$ is $(a,b)$-convex for $a\leq 0$ and $b\geq 0$ or $a\leq 0$ and $b<0$, that is, for $a\leq 0$ and $b\in \mathbb{R}$.

According to Theorem \ref{thm1}, the function $x\mapsto {\rm arcsin}_{p,q}(x)$ is $(a,a)$-convex for $a\geq 1$ and $p,q>1$. Taking into account that the power mean is increasing with respect to its order it follows that $x\mapsto {\rm arcsin}_{p,q}(x)$ is $(a,b)$-convex on $(0,1)$ for $1\leq a\leq b$ and $p,q>1$.

Let $u,v:(0,1)\to (0,\infty)$ be the functions defined by
$u(x)=x^{1-a}$ and $v(x)=\varphi(x)(\varphi(x))^{b-1}$ where $\varphi(x)={\rm arcsin}_{p,q}(x).$ Then we have that $u'(x)=(1-a)x^{-a}\geq 0$ if
$a\leq 1$, $x\in(0,1)$, and
$$v'(x)=(\varphi(x))^{b-2}(\varphi''(x)\varphi(x)+(b-1)(\varphi'(x))^2)\geq 0$$
if $b\geq 1$ and $x\in(0,1)$, since $\varphi$ is convex. Thus we obtain $x\mapsto x^{1-a}\varphi'(x)(\varphi(x))^{b-1}$ is decreasing on $(0,1)$
if $a\leq 1$ and $b\geq 1$. According to Lemma \ref{bari lemma 3} it follows that $x\mapsto {\rm arcsin}_{p,q}(x)$ is $(a,b)$-convex on $(0,1)$ for $a\leq 1$ and $b\geq 1.$

Finally, we would like to show that the $(a,b)$-convexity of $x\mapsto {\rm arcsin}_{p,q}(x)$ is the case when $b\geq a$ and $b\geq 1$ can be
proved by a somewhat different approach. For this, consider the function $\phi:(0,1)\to (0,\infty)$, defined by
$\phi(x)=x^{1-a}\varphi'(x)(\varphi(x))^{b-1}$. Then we have
$$\Delta(x)=\frac{x\phi'(x)}{\phi(x)}=1-a+\frac{q}{p}\frac{x^q}{1-x^q}+(b-1)\frac{x\varphi'(x)}{\varphi(x)},$$
and
$$\Delta'(x)=\frac{q}{p}\frac{qx^{q-1}}{(1-x^q)^2}+(b-1)\left(\frac{x\varphi'(x)}{\varphi(x)}\right)'\geq 0$$
if $x\in(0,1)$ and $b\geq 1$, since $\varphi$ is $(0,0)$-convex, as we proved before. Consequently we have $\Delta(x)\geq \Delta(0)=b-a\geq 0$
if $x\in(0,1)$ and $b\geq a$. In other words, if $b\geq a$ and $b\geq 1$, then $x\mapsto {\rm arcsin}_{p,q}(x)$ is $(a,b)$-convex on $(0,1)$.
\end{proof}

\begin{proof}[\bf Proof of Theorem \ref{nthm2}] Observe that for $f:(0,\infty)\to(0,\infty),\,f(t)=(1+t^q)^{-1/p}$, we have
$$\left(\frac{tf'(t)}{f(t)}\right)'=-\frac{q^2}{p}\frac{t^{q-1}}{(1+t^q)^2}<0\quad \mbox{for all}\quad t>0\quad \mbox{and} \quad p,q>1.$$
Consequently, $f$ is geometrically concave on $(0,\infty)$ and according to Lemma \ref{bari theorem 7} the function
$$x\mapsto {\rm arsinh}_{p,q}(x)=\int_0^x(1+t^q)^{-1/p}dt$$ is also geometrically concave (or (0,0)-concave) on $(0,\infty)$.

Now, consider the functions $u,v,w:(0,\infty)\to(0,\infty)$, defined by
$$u(x)=\frac{x\varphi'(x)}{\varphi(x)},\quad v(x)=x^{-a},\quad w(x)=(\varphi(x))^b,$$
where $\varphi(x)={\rm arsinh}_{p,q}(x)$. Observe that for $a\geq 0$ and $b\leq 0$, the function $v$ and $w$ are decreasing, and hence in this case,
by using the fact that $\varphi$ is $(0,0)$-concave, we get that
$$x\mapsto \frac{x\varphi'(x)}{\varphi(x)}x^{-a}(\varphi(x))^b$$
is also decreasing on $(0,\infty)$ as a product of three positive decreasing functions. In other words, $\varphi$ is $(a,b)$-concave on $(0,\infty)$
for $a\geq 0$ and $b\leq 0$.

By Theorem \ref{thm1}, the function $\varphi$ is $(a,a)$-concave for $a\geq 1$. It follows from the monotonicity of the power mean that $\varphi$ is
$(a,b)$-concave for $b\leq a$ and $a\geq 1$.
\end{proof}

\begin{proof}[\bf Proof of Theorem \ref{nthm3}] Consider the function $f:(0,1)\to(0,\infty)$, defined by $$f(t)=(1-t^p)^{1/q-1}t^{p-2}.$$ Then we have
$$\left(\frac{t^{1-a}f'(t)}{f(t)}\right)'=\frac{p}{q}(q-1)\frac{(p-a)t^{p-a-1}+at^{2p-a-1}}{(1-t^p)^2}+a(p-2)t^{-a-1}.$$
If $a=0$, then we have that $f$ is geometrically convex on $(0,1)$ for $p,q>1$. This in turn implies that the function
$$x\mapsto \lambda(x)=\frac{q}{p}{\rm arcsin}_{p,q}(1)-\frac{q}{p}{\rm arccos}_{p,q}(x)=\int_0^xf(t)dt$$
is also geometrically convex on $(0,1)$.
Moreover, if $a<0$, then taking into account that
$$\left(\frac{t^{1-a}f'(t)}{f(t)}\right)'=\frac{p}{q}(q-1)\frac{t^{p-a-1}(p+a(t^p-1))}{(1-t^p)^2}+a(p-2)t^{-a-1},$$
we obtain that for $p\in(1,2],$ $q>1$ the function $f$ is $(a,0)$-convex. Since $t^{1-a}(1-t^p)^{1/q-1}t^{p-2}\to 0$ as $t\to 0$, according to
Lemma \ref{bari theorem 7} the function $\lambda$ is $(a,b)$-convex for $a<0,$ $b\geq 0$ and $p\in(1,2],$ $q>1$. On the other hand, the function
$$t\mapsto t^{1-a}(1-t^p)^{1/q-1}t^{p-2}=t^{p-a-1}(1-t^p)^{1/q-1}$$
is increasing on $(0,1)$ for $p,q>1$ and $a<0$. Appealing again to Lemma \ref{bari theorem 7} the function $\lambda$ is $(a,b)$-convex
for $a<0,$ $b<0$ and $p\in(1,2],$ $q>1$.

Finally, if we consider the functions $u,v,w:(0,1)\to (0,\infty)$, defined by
$$u(x)=\frac{x\lambda'(x)}{\lambda(x)},\quad v(x)=x^{-a},\quad w(x)=(\lambda(x))^b,$$
then we get the $x\mapsto x^{1-a}\lambda'(x)(\lambda(x))^{b-1}$ is increasing on $(0,1)$ as a product of three positive increasing functions
$u,\,v$ and $w$ when $a\leq0$ and $b\geq 0$. Here we used that $\lambda$ is geometrically convex on $(0,1)$.
\end{proof}

\end{document}